\def\@maketitle{\newpage
    \null
    \vskip .8truein
    \begin{center}%
     {\bf \@title \par}%
     \vskip 1.5em
     {\small
      \lineskip .5em
      \begin{tabular}[t]{c}\@author
      \end{tabular}\par}%
    \end{center}%
    \par
    \vskip .4truein}
\let\nn=\nonumber
\newcommand{\re}{{\mathbb R}}
\newcommand{\nat}{{\mathbb N}}
\let\ds=\displaystyle
\def\R{{\bf R}}
\def\N{{\bf N}}
\def\N{{\bf N}}
\newtheorem{theorem}{Theorem}[section]
\newtheorem{lemma}{Lemma}[section]
\newtheorem{proposition}{Proposition}[section]
\newtheorem{definition}{Definition}[section]
\newtheorem{corollary}{Corollary}[section]
\newtheorem{remark}{Remark}[section]
\newtheorem{example}{Example}[section]
  {\hfill$\Box$\bigskip\par}
\def\proof{\list{}{\setlength{\leftmargin}{0pt}
                      \parskip=0pt\parsep=0pt\listparindent=2em
                      \itemindent=0pt}\item[]\futurelet\testchar\@maybe}
\def\@maybe{\ifx[\testchar \let\next\@Opt
          \else \let\next\@NoOpt \fi \next}
\def\@Opt[#1]{{\it Proof of #1.\ }}\def\@NoOpt{{\it Proof.\ }}
\begin{document}
\title{\Large \bf The ergodic problem for some subelliptic operators with unbounded coefficients}

\author{{\large \sc Paola Mannucci, Claudio Marchi, Nicoletta Tchou}\\
 \rm Universit\`a degli Studi di Padova, Universit\'e de Rennes 1}
%%%%%%%%%%%%%%%%%%%%%%%%%%%%%%%%%%%%%%%%%%%%%%%%%%%%%%%%%%%%%%%%%

\maketitle

%\vskip-1.in

%\begin{abstract}
\begin{abstract}
\noindent 
%We study the ergodic problem for some classes of subelliptic operators with unbounded coefficients in the whole space $\re^N$. 
We study existence and uniqueness of the invariant measure for a stochastic process
with degenerate diffusion,
whose infinitesimal generator is a linear 
 subelliptic operator in the whole space $\re^N$ with coefficients that may be unbounded.
Such a measure together with a Liouville-type theorem will play a crucial role in two applications: the ergodic problem studied through stationary problems with vanishing discount and the long time behavior of the solution to a parabolic Cauchy problem. In both cases, the constants will be characterized in terms of the invariant measure.
%As an interesting byproduct, we shall obtain the convergence of a related homogenization problem/singular perturbation. 
%We study a homogenization problem for a linear subelliptic equation in Heisenberg group. A convergence result is obtained studying  the associate cell problem which has unbounded coefficients and degenerate diffusion. We prove the existence of the invariant measure to identify the limit equation. 
\end{abstract}
\noindent {\bf Keywords}:  Subelliptic equations, Heisenberg group, invariant measure, 
%Homogenization, 
viscosity solutions, degenerate elliptic equations, ergodic problem, long time behavior.

\noindent  {\bf 2010 AMS Subject classification:} 35J70, 35H10, 35Q93, 49L25, 58F11.

%\footnote{\date{\today}}
%\newpage
%%%%%%%%%%%%%%%%%%%%%%%%%%%%%%%%%%%%%

\section{Introduction}

%This paper is devoted to study, with pde's methods, the ergodic problem for some classes of subelliptic operators in the whole space $\re^N$ whose coefficients may be unbounded.
%A crucial ingredient for the study of these problems will be the existence and uniqueness of the invariant measure of the underlying stochastic process
%with degenerate diffusion,
%whose infinitesimal generators are linear
% subelliptic operators in the whole space $\re^N$ with coefficients that may be unbounded. 
 
 This paper is devoted to study with pde's methods, the existence and uniqueness of the invariant measure of stochastic processes
with degenerate diffusion,
whose infinitesimal generators are linear
 subelliptic operators in the whole space $\re^N$ with coefficients that may be unbounded.
 %As a byproduct, these properties will be used in the homogenization problem with small noise/singular perturbation problem.
The invariant measures play a crucial role in ergodicity, homogenization and large time behaviour of the value function associated to the process. 
These methods, based on optimal control theory and pde's arguments, were introduced in the 80's by Bensoussan and developed until nowadays (see the monograph~\cite{B1} by Bensoussan and references therein).

We shall first tackle the case of the Heisenberg group as model problem; after we shall extend our techniques to other subelliptic operators.
In the Heisenberg case, we consider the stochastic dynamics
\begin{equation}\label{process}
dX_t=b(X_t)dt +\sqrt2\sigma(X_t)dW_t\quad \textrm{for }t\in(0,+\infty),\qquad X_0= x^0\in \re^3
\end{equation}
where, if $x=(x_1, x_2, x_3)\in \re^3$, the matrix $\sigma(x)$ has the form
\begin{equation}\label{matrixH}
\sigma(x)=\begin{bmatrix}
	1 &0\\
0& 1\\
2x_2& -2x_1
\end{bmatrix}
	\end{equation}
(in other words, the columns of $\sigma$ are vectors generating the Heisenberg group) while $W_t$ is a $3$-dimensional Brownian motion.

Our principal aim is to prove, under suitable assumptions on the drift $b$,  
the existence and uniqueness of the invariant measure~$m$ associated to the process~\eqref{process}.\\
Let us recall from \cite{B1} that a probability measure~$m$ on $\re^3$ is an {\it invariant measure} for process~\eqref{process} if, for each $u_0\in \mathbb L^\infty(\re^3)$, it satisfies
\begin{equation}\label{4.12}
\int_{\re^3} u(x,t)m(x)\, dx = \int_{\re^3} u_0(x)m(x)\, dx
\end{equation}
where $u(x,t)=\mathbb E_x(u_0(X_t))$ is the solution to the parabolic Cauchy problem
\begin{equation*}
\left\{
\begin{array}{ll}
\partial_t u+{\cal L}u=0&\qquad \textrm{in }(0,+\infty)\times \re^3\\
u(0,x)=u_0(x)&\qquad \textrm{on }\re^3
\end{array}\right.
\end{equation*}
and
\begin{equation}\label{op_l}
-{\cal L}u:=tr(\sigma(x)\sigma^T(x)D^2u(x))+b(x)\cdot Du(x)
\end{equation}
is the {\it infinitesimal generator} of process~\eqref{process}.

It is well known (see~\cite[Sect. II.4 and II.5]{B1}) that the density of the probability~$m$ (which, with a slight abuse of notation, we still denote by~$m$) solves
\begin{equation*}
{\cal L}^*m=0,\qquad \int_{\re^3} m\, dx=1\qquad\textrm{and}\qquad m\geq 0,
\end{equation*}
where ${\cal L}^*m$ is the adjoint operator
\begin{equation*}
{\cal L}^*m= -\sum_{i,j}\partial_{ij}({(\sigma\sigma^T)}_{ij}m) + \sum_{i}\partial_{i}(b_im).
\end{equation*}
\noindent In the framework of locally strongly elliptic operators, Has'min\-ski\v \i~\cite[Sect. IV.4]{Ha} (see also~\cite[Sect.8.2]{LB}) established the existence of an invariant measure provided that there exists a bounded set~$U$ with smooth boundary such that
\begin{equation}\label{h2}
\left\{\begin{array}{l}
\textrm{for any $x^0\in\re^N\setminus U$, the mean time $\tau$ at which the path \eqref{process} issuing}\\
\textrm{from $x^0$ reaches $U$ is finite and $\mathbb E_x \tau$ is locally finite.}
\end{array}\right.
\end{equation}
%For similar results obtained by pure pde's methods see also \cite{AL, B1}.

In our case this result does not apply because the matrix $A:=\sigma\sigma^T$ with $\sigma$ given by (\ref{matrixH})
is 
\begin{equation}\label{matrixA}
A(x)=\begin{bmatrix}
	1 &0&2x_2\\
0& 1&-2x_1\\
2x_2& -2x_1&4(x_1^2+x_2^2)
\end{bmatrix}
	\end{equation}
	and it is only positive semidefinite.\\
It is worth noticing (see~\cite{BCM, C, LM}) that a sufficient condition for property~\eqref{h2} is the existence of a {\it Lyapunov}-like function~$w$ which satisfies, for some positive constants~$k$ and~$R_0$
\begin{equation}\label{Lipintr}
{\cal L}w\geq k \quad \textrm{for }|x|\geq R_0\qquad\textrm{and}\qquad
w(x)\to +\infty\quad \textrm{as } |x|\to+\infty.
\end{equation}
As one can easily check the presence of the first order term is somehow 'crucial' for the existence of such a function. 
We will prove the existence of such Lyapunov function under suitable assumptions on the drift $b$ that include also the Ornstein-Uhlenbeck case (see \cite{LB} and Remark \ref{OU} below) where the operator is of the following type
$$
-{\cal L}u:=\,tr(\sigma(x)\sigma^T(x)D^2u(x))-\alpha x\cdot Du(x),\qquad \alpha>0.
$$

For ergodicity results based on probabilistic methods we refer to \cite{KP} and \cite{Kus} and the references therein.
The existence of a Lyapunov function is reminiscent of similar conditions (for instance, see: \cite[Sect. 8.2]{LB} and \cite{PV1,PV2,PV3}) called ``recurrence condition'' in the probabilistic jargon.

Ichihara and Kunita~\cite{IK} (see also~\cite{Ku}) proved the existence of an invariant measure for hypoelliptic processes as ~\eqref{process} which are constrained in a compact set. It is worth to recall that, in unbounded set the existence of an invariant measure may fail as it can be easily seen for~\eqref{process} with $b=0$ and $\sigma=I$.

In this paper we want to establish existence and uniqueness of an invariant measure for process~\eqref{process}, namely for a process with the following features: it lies in an unbounded set 
and its infinitesimal generator is simultaneously degenerate and with
unbounded coefficients.
To this end we shall use only pure analytical arguments.

It is important to stress that, in the Heisenberg case,
the principal part of $\mathcal Lu$ can be written as 
$\sum_{i=1}^{2}X_i^2u$ where 
$X_1$,  $X_2$, are the vector fields given by the columns of $\sigma$ and that 
they 
satisfy H\"ormander condition: $X_1$,  $X_2$, and their commutators of any order span $\re^3$
at each point $(x_1, x_2, x_3)\in\re^3$. In this case we have that $[X_1, X_2]=-4\partial_{x_3}$.
This
property will play a crucial role in this paper
since, as for the uniformly elliptic case, we have regularity, comparison and maximum principle (\cite{Bo}). 

The methods used in this work are strongly inspired by the lectures "Equations paraboliques et ergodicit\'e"
of P.L Lions at Coll\`ege de France (2014-15)~\cite{Lnt} and  by a unpublished manuscript by P.L. Lions and M. Musiela~\cite{LM} (see also the paper of Cirant~\cite{C} for similar arguments).\\
Actually, we shall consider the process
\begin{equation}\label{p_rho}
dX_t^\rho=b(X_t^\rho)dt +\sqrt2\sigma_\rho(X_t^\rho)dW_t,
\end{equation}
where $\sigma_\rho$ is the approximating  matrix of $\sigma$ in (\ref{matrixH}):
$$
\sigma_{\rho}(x)=\begin{bmatrix}
	1 &0&0\\
0& 1&0\\
2x_2& -2x_1&\rho
\end{bmatrix}
	$$
such that $A_{\rho}= \sigma_{\rho}\sigma^T_{\rho}$ is locally strictly positive, 
constrained in a bounded set $O_n$ suitably chosen.

Let us stress that, in our argument, it is not enough to approximate the matrix~$A$ with any non-degenerate matrix~$A_\rho$ but we also need that $A_\rho$ can be written as $\sigma_{\rho}\sigma^T_{\rho}$, where $\sigma_{\rho}$ is the diffusion matrix of a new underlying optimal control problem. 
This issue motivates the fact that in~\eqref{p_rho} a new Brownian motion appears.

Let us recall from \cite{B1} that the invariant measure~$m^n_\rho$ of this process solves
\begin{equation*}
{\cal L}^*_\rho m^n_\rho=0 \qquad \textrm{in } O_n 
\end{equation*}
coupled with a boundary condition of Neumann type, where 
\begin{equation*}
-{\cal L}_\rho (u)=\,tr(\sigma_\rho(x)\sigma^T_\rho(x)D^2u(x))+b(x)\cdot Du(x)
\end{equation*}
is an uniformly elliptic operator in  $O_n$. Letting $n\to+\infty$, we obtain and invariant measure $m_\rho$ for the process~\eqref{p_rho} in the whole space; letting $\rho\to0^+$, we get the desired invariant measure for~\eqref{process}. The Lyapunov function will play a crucial role in these limits: it will be used in order to prove that all the $m_\rho$'s and $m$ are really measures (in other words, that the $m_\rho^n$ and the $m_\rho$ do not ``disperse at infinity'').

Moreover in this paper we also establish a Liouville type result.
Similar result for semilinear operator without the drift term can be founded in the papers~\cite{BCDC1, BCDC2, CDC} and references therein; in all these papers the nonlinear zeroth order term is the key ingredient whereas, in our setting, the crucial contribution is due to the drift.

\vskip 2mm
We shall use the invariant measure and the Liouville property in two classic applications: an ergodic problem and 
the long time behaviour of a Cauchy problem. 
For the former problem we consider the family of equations
%\begin{equation*}%\label{ergodic}
%\lambda-tr(\sigma(x)\sigma^T(x)D^2 u)-b(x)\cdot Du=F(x)\qquad x\in\re^n.
%\end{equation*}
\begin{equation}\label{ergointro}
\delta u_{\delta}-tr(\sigma(x)\sigma^T(x)D^2 u_{\delta})-b(x)Du_{\delta}=F(x)\qquad \textrm{in }\re^3,
\end{equation}
where  
$\delta>0$ and we shall prove that, as $\delta\rightarrow0$, $\delta u_{\delta}$ converges to a constant $\lambda$, 
called "ergodic" constant.
Let us stress that the differential operator in the ergodic problem coincides with the infinitesimal generator $\cal L$ of process~\eqref{process}.

We recall that the study of ergodic problems for equations with periodic, uniformly elliptic, operators has been addressed in~\cite{AL, BLP} while, for periodic, possibly degenerate (still satisfying the 
H\"ormander's condition) operators, we refer the reader to the papers~\cite{AB1, ABM}.

The main difficulties in our problem are the lack of periodicity and the degeneracy of the operator. We shall overcome these issues using some techniques introduced by~\cite{BCM} for  an elliptic operator on the whole space.
%As a matter of fact, we shall take advantage of these two features: ($a$) the differential operator in the ergodic problem coincides with the infinitesimal generator $\cal L$ of process~\eqref{process}, ($b$) a Liouville-type theorem is established (by means of the Lyapunov function).
Moreover, we shall give an explicit formula for the ergodic constant $\lambda$ in terms of the invariant measure for~\eqref{process}.

In the latter application we 
consider the 
following Cauchy problem:
\begin{equation*}
%\left\{\begin{array}{ll}
u_t+{\mathcal L}u=0\quad \textrm{in }(0,+\infty)\times\re^3,\qquad
u(0, x)=f(x)\quad \textrm{on }\re^3,
%\end{array}\right.
\end{equation*}
where $\mathcal L$ is the operator defined in (\ref{op_l}).
We will prove that, as $t\to+\infty$, the solution $u$ converges to a constant $\Lambda$ which will be characterised in terms of the invariant measure.
%%%%%%%%%%%%%%

%{\bf DA DISCUTERE:} As an interesting byproduct, we shall use this invariant measure to study the homogenization problem
%\begin{equation*}%\label{Hom}
%-\epsilon\,tr(\sigma(\frac{x}{\epsilon})\sigma^T(\frac{x}{\epsilon})D^2u_{\epsilon})-b(\frac{x}{\epsilon})\cdot Du_{\epsilon}+ f(x,\frac{x}{\epsilon})+\lambda u_{\epsilon}=0 \mbox{ in } \re^3,
%\end{equation*}
%where $\lambda$ is a positive constant, $\sigma$ has the form~\eqref{matrixH} and $f$ is a Lipschitz continuous function.
%Let us recall that the study of homogenization problems for linear equations with periodic, uniformly elliptic, operators has been addressed in~\cite{BLP,JKO} while, for periodic, possibly nonlinear, degenerate (still satisfying the 
%H\"ormander's condition) operators, we refer the reader to the papers~\cite{AB1, ABM, BW, BMT, BMT1, MS, S1}.
%
%The main difficulties in our problem are the lack of periodicity and the degeneracy of the operator. We shall overcome these issues using some techniques introduced by~\cite{BCM} for a singular perturbation problem of an elliptic operator on the whole space.
%As a matter of fact, we shall take advantage of these two features: ($a$) the differential operator in the cell problem coincides with the infinitesimal generator $\cal L$ of process~\eqref{process}, ($b$) a Liouville-type theorem is established (by means of the Lyapunov function).
%Moreover, we shall give a representation formula of the effective operator in terms of the invariant measure for~\eqref{process}.
%
%\vskip 2mm

Finally, we shall show how to extend our previous results to other degenerate operators satisfying H\"ormander condition with possibly unbounded coefficients.

\vskip 2mm

Our future purpose is to 
 use the ergodic problem to study the homogenization problem
\begin{equation}\label{Hom}
-\epsilon\,tr(\sigma(\frac{x}{\epsilon})\sigma^T(\frac{x}{\epsilon})D^2u_{\epsilon})-b(\frac{x}{\epsilon})\cdot Du_{\epsilon}+ f(x,\frac{x}{\epsilon})+au_{\epsilon}=0 \mbox{ in } \re^3,
\end{equation}
where $\sigma$ has the form~\eqref{matrixH}.
%Let us recall that the study of homogenization problems for linear equations with periodic, uniformly elliptic, operators has been addressed in~\cite{BLP,JKO} while, for periodic, possibly nonlinear, degenerate (still satisfying the 
%H\"ormander's condition) operators, we refer the reader to the papers~\cite{AB1, ABM, BW, BMT, BMT1, MS, S1}.
%The main difficulties in this kind of problem are the lack of periodicity and the degeneracy of the operator. 
%In this direction we want to quote here the papers~\cite{BCM}  \cite{BCS} for a singular perturbation problem of an elliptic operator on the whole space.
% taking advantage of these two features: ($a$) the differential operator in the cell problem coincides with the infinitesimal generator $\cal L$ of process~\eqref{process}, ($b$) the Liouville-type theorem.
 In this case the approximated cell problem formally coincides with the problem (\ref{ergointro}).\\
 For the study of homogenization problems for periodic, possibly nonlinear, degenerate 
 (still satisfying the 
H\"ormander's condition) operators, we refer the reader to the papers~\cite{AB1, BMT1, MS}.

This paper is organized as follows: 
Section \ref{IM} contains the main result of the paper: we find conditions on the drift $b$ such that a Lyapunov functions does exist and by means of this function we prove the existence and uniqueness of an invariant measure associated 
to our process.
In Section \ref{sect:liou}, we establish a Liouville type result assuming the existence of a Lyapunov-like function.
Section \ref{Appl} is devoted  to our applications:
in Section  \ref{sectergodic} we study the ergodic problem through stationary problems with vanishing discount,
while in Section
\ref{large time} we consider the long time behaviour of a Cauchy problem.
In Section \ref{sect:gen_case} we generalise the previous results to a more general class of subelliptic operators, encompassing {\it e.g.} the Grushin one.
The Appendix contains a condition equivalent to (\ref{Lipintr}) which will be useful to manage the Lyapunov function founded in Section~\ref{IM}.

%
%invariant measure
%
\section{Existence and uniqueness of the invariant measure}\label{IM}
This section is devoted to the invariant measure for process~\eqref{process}. Let us recall (see \cite{LM} or Proposition~ \ref{thmdelta} below) that, when the matrix associated to the infinitesimal generator 
$\mathcal L$ 
is a strictly definite positive matrix, a sufficient condition for the existence of an invariant measure is given by:\\
there exists a {\em Lyapunov-like} function such that 
\begin{eqnarray}
%&&\mbox{there exists a Lyapunov-like function such that }
&& w\in C^{\infty}(B_0^C)\cap C^0(\re^3)\label{C20}\\
&&\mathcal Lw%:=-tr(\sigma(\xi)\sigma^T(\xi)D^2 w)-b(\xi)Dw
\geq 1,\ \mbox{ in } B_0^C\nn\\
&&w\geq 0 \mbox{ in } B_0^C ,\ w=0 \mbox{ on } \partial B_0,\nn
\end{eqnarray}
where $B_0$ is a ball centered in $0$ with suitable radius.
(For less regular functions $w$, we refer to (\cite{LM})).

In our case, the matrix $A=\sigma\sigma^T$ in (\ref{matrixA}) is degenerate in any point, and the rank of the matrix is  2. 
In order to overcome this issue, for $\rho>0$, we introduce the approximating operators
%Such a condition is a sufficient condition to the existence of an invariant measure $m$ associated to the diffusion process
%\begin{equation}\label{process}
%dY_t=b(Y_t)dt +\sqrt2\sigma(Y_t)dW_t,
%\end{equation}
%such that $\mathcal LV:=-tr(\sigma(\xi)\sigma^T(\xi)D^2 V(\xi))-b(\xi)DV$ is its infinitesimal generator in the case when the matrix $\sigma(\xi)\sigma^T(\xi)>a>0$ for any $\xi\in\re^N$, i.e. the operator $\mathcal L$ is uniformly elliptic. (see Proposition \ref{thmdelta} here below or Lions Musiela \cite{LM}).\\
%Here $W_t= (W_{1t}, W_{2t})$ where $W_{1t}$ and $W_{2t}$ are two independent Brownian motions.\\
%Since in our case the operator is not uniformly elliptic the condition (\ref{C20}) does not directly implies the existence of an invariant measure. 
%from classical results, i.e. \cite{B1}. 
%Therefore we consider an approximating problem in this way: let $\rho>0$
\begin{equation}\label{Ldelta}
\mathcal L_{\rho}w:=-tr(A_{\rho}(x)D^2 w)-b(x)Dw,
\end{equation}
where %$A_{\rho}(\xi)$ is the matrix
\begin{equation}\label{matrixArho}
A_{\rho}(x)=\begin{bmatrix}
	1 &0&2x_2\\
0& 1&-2x_1\\
2x_2& -2x_1& 4(x_1^2+x_2^2)+\rho^2
\end{bmatrix}=\sigma(x)\sigma^T(x)+\begin{bmatrix}
	0 &0&0\\
0& 0&0\\
0&0&\rho^2
\end{bmatrix}.
	\end{equation}
In the following Lemma we collect some useful properties of $\mathcal L_\rho$.
\begin{lemma}\label{strictell}
The matrix $A_{\rho}(x)$ is locally strictly positive definite (namely, for any compact $K\subset\re^3$, there holds $\lambda A_{\rho}(x)\lambda^T\geq \nu(x)|\lambda|^2$ for any $x\in K$, with $\nu(x)\geq a(K,\rho)>0$) and it is positive definite in $\re^3$.

Moreover, there exists a $3\times 3$ matrix $\sigma_{\rho}(x)$ with linear coefficients such that 
\begin{equation}
\label{arhosigma}
A_{\rho}(x)=\sigma_{\rho}(x)\sigma^T_{\rho}(x).
\end{equation}
\end{lemma}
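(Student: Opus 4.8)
The plan is to verify the two assertions separately. For the positivity claims, I would compute the quadratic form directly: for $\lambda=(\lambda_1,\lambda_2,\lambda_3)$,
\[
\lambda A_\rho(x)\lambda^T = (\lambda_1+2x_2\lambda_3)^2 + (\lambda_2-2x_1\lambda_3)^2 + \rho^2\lambda_3^2.
\]
This is manifestly $\geq 0$, and it vanishes only when $\lambda_3=0$ (from the last term) and then $\lambda_1=\lambda_2=0$ (from the first two), so $A_\rho(x)$ is positive definite at every $x$. For the local uniform bound on a compact $K$, I would note that $A_\rho(x)$ depends continuously on $x$, its smallest eigenvalue $\nu(x)$ is a continuous strictly positive function, hence attains a positive minimum $a(K,\rho)>0$ on $K$; alternatively one can get an explicit lower bound by estimating the above sum of squares from below in terms of $|\lambda|^2$ using $|x|\leq R$ on $K$.

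For the factorization \eqref{arhosigma}, the key observation is that the sum-of-squares expression above already exhibits $A_\rho$ as a Gram matrix. Writing $\lambda A_\rho(x)\lambda^T = |M(x)^T\lambda|^2$ for a suitable matrix $M(x)$, we read off that the rows of $M(x)^T$ — equivalently the columns of $M(x)$ — are the three vectors $(1,0,2x_2)^T$, $(0,1,-2x_1)^T$, $(0,0,\rho)^T$, which are exactly the columns of the matrix $\sigma_\rho$ already introduced in the Introduction. Thus I would simply set
\[
\sigma_\rho(x)=\begin{bmatrix} 1 & 0 & 0\\ 0 & 1 & 0\\ 2x_2 & -2x_1 & \rho\end{bmatrix}
\]
and check by a direct $3\times3$ matrix multiplication that $\sigma_\rho(x)\sigma_\rho^T(x)=A_\rho(x)$; the coefficients of $\sigma_\rho$ are visibly affine (indeed linear up to the constant $\rho$) in $x$, as required. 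In fact one sees the block structure: $\sigma_\rho = \sigma + \rho\, e_3 e_3^T$ where $\sigma$ is the Heisenberg matrix \eqref{matrixH} augmented by a zero third column, and the cross terms $\sigma \cdot \rho e_3 e_3^T$ vanish because the third column of $\sigma$ is zero, which is precisely why only the $\rho^2$ entry is added in \eqref{matrixArho}.

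There is no real obstacle here; the statement is essentially a bookkeeping verification. The only point requiring a sentence of care is the phrase ``with linear coefficients'': one should remark that $\sigma_\rho$ is not unique (any $\sigma_\rho Q$ with $Q$ orthogonal also works) but the particular choice above has the desired affine dependence on $x$, which is what matters for interpreting \eqref{p_rho} as a genuine stochastic differential equation with a well-defined underlying control problem. I would close by recording the explicit formula for $\sigma_\rho$ since it is used throughout the subsequent approximation arguments.
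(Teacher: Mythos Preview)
Your proof is correct, and the factorization you exhibit is exactly the matrix~$\sigma_\rho$ the paper writes down. The route, however, differs from the paper's. The authors compute the three eigenvalues of $A_\rho(x)$ explicitly: setting $\alpha=4(x_1^2+x_2^2)+\rho^2+1$ they find $\lambda_1=1$ and $\lambda_{2,3}=\tfrac{1}{2}\bigl(\alpha\pm\sqrt{\alpha^2-4\rho^2}\bigr)$, and then bound the smallest one below by $\rho^2/(4R^2+\rho^2+1)$ on $\{x_1^2+x_2^2\le R^2\}$. Your argument bypasses the eigenvalue computation entirely by writing the quadratic form as a sum of three squares, which simultaneously proves positive definiteness and \emph{reveals} the factor $\sigma_\rho$ as the Gram matrix behind that sum. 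This is tidier and more conceptual; the paper's approach, on the other hand, yields an explicit quantitative lower bound $a(K,\rho)$ in closed form, though that explicit constant is not actually exploited later in the paper. Either argument is perfectly adequate here.
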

\begin{proof}
Set $\alpha=4(x_1^2+x_2^2)+\rho^2+1$.
The eigenvalues of $A_{\rho}$ are
$$\lambda_1=1,\quad
\lambda_{2,3}=\frac{\alpha\pm\sqrt{\alpha^2-4\rho^2}}{2}.$$
It is easy to remark that $\lambda_2\geq \frac{1}{2}$.\\
The last eigenvalue is $\lambda_{3}= \frac{2\rho^2}{\alpha+\sqrt{\alpha^2-4\rho^2}}>\frac{\rho^2}{\alpha}$
hence, for any fixed $R>0$, if  $x_1^2+x_2^2\leq R^2$, $\alpha\leq 4R^2+\rho^2+1$
and $\lambda_{3}>\frac{\rho^2}{4R^2+\rho^2+1}>0$.\\
The matrix
\begin{equation}\label{matrixsigmarho}
\sigma_{\rho}(x)=\begin{bmatrix}
	1 &0&0\\
0& 1&0\\
2x_2& -2x_1&\rho
\end{bmatrix}.
	\end{equation} 
verifies \eqref{arhosigma} 
 \end{proof}
\begin{remark} \rm{
From (\ref{arhosigma}), 
beside being uniformly elliptic, 
the operator $-\mathcal L_\rho$ is also the infinitesimal generator of the stochastic process 
\begin{equation}\label{processrho}
dX^{\rho}_t=b(X^{\rho}_t)dt +\sqrt2\sigma_{\rho}(X^{\rho}_t)dW_t,
\end{equation}
where $\sigma_\rho$ is defined in~\eqref{matrixsigmarho} and $W_t= (W_{1t}, W_{2t}, W_{3t})$ and $W_{1t}$, $W_{2t}$,   $W_{3t}$ are three independent Brownian motions whereas our starting process~\eqref{process} only contains two independent Brownian motions.}
%The infinitesimal generator associated to the process $\mathcal L_{\rho}w:=-tr(A_{\rho}(\xi)D^2 w(\xi))-b(\xi)Dw$ tends to $\mathcal Lw:=-tr(A(\xi)D^2 w(\xi))-b(\xi)Dw$ if $\rho\rightarrow 0$.
\end{remark}
Now, we want to prove that, for some classes of drifts $b$, there exists a function $w$ satisfying~\eqref{C20} with $\mathcal L$ replaced by  $\mathcal L_\rho$. To this end, we consider a continuous drift $b=(b_1,b_2,b_3)$ such that
\begin{equation}\label{hp_b}
  b_i(x)=  b_i(x_i),\qquad
\left\{\begin{array}{ll}
b_i(x_i)\leq-\frac{C_i}{|x_i|^{1-\alpha}} &\textrm{for }x_i\geq R\\
b_i(x_i)\geq\frac{C_i}{|x_i|^{1-\alpha}} &\textrm{for }x_i\leq -R
\end{array}\right.
\end{equation}
for some constants $\alpha\geq 0$, $R>0$ and $C_i>0$ ($i=1,2,3$).\\
Note that Lemma \ref{Lya} here below, holds also for $\rho=0$ then we have a Lyapunov-like function $w$ ( i.e. satisfying condition (\ref{C20}))
also for the degenerate starting problem where $\mathcal L$ is given by (\ref{L}).\\
Similar conditions to (\ref{hp_b}) was obtained in \cite{LM} with $\sigma=I$ the identity matrix.
\begin{lemma}\label{Lya}
Assume $\sigma$ as in~\eqref{matrixH}. Assume that $b$ is a continuous function verifying~\eqref{hp_b} with
\begin{itemize}
\item[(i)] either $\alpha>0$,
\item[(ii)] or $\alpha=0$ and sufficiently large $C_i$.
\end{itemize}
Then, there exists a $R_0$ and a $C^\infty$ function~$w$ which satisfies
\begin{equation}\label{C2}
\mathcal L_{\rho}w\geq 1 \quad \textrm{in }\overline{B(0,R_0)}^C,\quad
w\geq 0\quad \textrm{in } \overline{B(0,R_0)}^C,\quad \lim_{\vert x\vert\rightarrow \infty} w=\infty
\end{equation}
for $\rho$ sufficiently small.
\end{lemma}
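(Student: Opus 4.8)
The plan is to construct $w$ explicitly in an additively separable form $w(x) = \sum_{i=1}^3 \varphi_i(x_i)$, exploiting the fact that the drift $b$ decouples as $b_i(x)=b_i(x_i)$ by hypothesis~\eqref{hp_b}. With this ansatz, $D^2 w$ is diagonal with entries $\varphi_i''(x_i)$, so that
\begin{equation*}
\mathcal L_\rho w = -\sum_{i=1}^3 (A_\rho)_{ii}(x)\,\varphi_i''(x_i) - \sum_{i=1}^3 b_i(x_i)\varphi_i'(x_i).
\end{equation*}
Since $(A_\rho)_{11}=(A_\rho)_{22}=1$ and $(A_\rho)_{33}=4(x_1^2+x_2^2)+\rho^2$, the only coupling between coordinates enters through the coefficient of $\varphi_3''$. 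The idea is therefore to take $\varphi_3$ \emph{concave} (so $\varphi_3''\le 0$), which turns the potentially large and $x$-dependent coefficient $4(x_1^2+x_2^2)+\rho^2$ into a \emph{helpful} (nonnegative) contribution rather than an obstacle; and to choose $\varphi_1,\varphi_2$ so that the good sign of the drift (pointing inward, by~\eqref{hp_b}) dominates the diffusion terms $-\varphi_i''$.

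The natural candidates are powers: for $|x_i|$ large take $\varphi_i(x_i) = |x_i|^{\beta_i}$ (smoothed near the origin and patched to vanish on $\partial B(0,R_0)$), with $0<\beta_i<1$ when we want concavity. With $\varphi_i(x_i)=|x_i|^{\beta}$ one has $\varphi_i'(x_i)=\beta\,\mathrm{sign}(x_i)|x_i|^{\beta-1}$ and $\varphi_i''(x_i)=\beta(\beta-1)|x_i|^{\beta-2}$, so $-b_i(x_i)\varphi_i'(x_i) \ge \beta C_i |x_i|^{\alpha-1}\cdot|x_i|^{\beta-1} = \beta C_i |x_i|^{\alpha+\beta-2}$ on $\{|x_i|\ge R\}$ by~\eqref{hp_b}, while $-\varphi_i''(x_i) = \beta(1-\beta)|x_i|^{\beta-2}$ has a fixed sign. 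In case~(i), $\alpha>0$: picking $\beta$ close to $1$ makes the drift term of order $|x_i|^{\alpha+\beta-2}$ with exponent $>\beta-2$, hence it beats the diffusion term $|x_i|^{\beta-2}$ for large $|x_i|$; one checks the sum over $i=1,2,3$ stays $\ge 1$ outside a large ball, uniformly in small $\rho$ (the $\rho^2\varphi_3''\le 0$ term only helps). In case~(ii), $\alpha=0$: now the drift term is $\beta C_i |x_i|^{\beta-2}$, exactly balancing the diffusion term $\beta(1-\beta)|x_i|^{\beta-2}$, so one needs $C_i > 1-\beta$, i.e.\ $C_i$ large enough (for $\varphi_1,\varphi_2$); this is precisely the ``sufficiently large $C_i$'' assumption. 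For $\varphi_3$, concavity ($\beta_3<1$) guarantees the cross term $-(4(x_1^2+x_2^2)+\rho^2)\varphi_3''(x_3)\ge 0$ causes no harm, and its own drift beats its own diffusion as above; alternatively one may even allow $\varphi_3$ to grow faster and use the $4(x_1^2+x_2^2)\varphi_3''$ term's sign, but the concave choice is cleanest.

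The remaining points are routine: (a) near the origin, replace $|x_i|^{\beta}$ by a smooth function (e.g.\ a $C^\infty$ function of $x_i^2$) agreeing with it for $|x_i|\ge R$, so that $w\in C^\infty$; the inequality $\mathcal L_\rho w\ge 1$ is only required on $\overline{B(0,R_0)}^C$ with $R_0$ large, so the modification region is irrelevant. (b) Subtract a constant so that, after choosing $R_0$ large, $w\ge 0$ on $\overline{B(0,R_0)}^C$; since each $\varphi_i(x_i)\to+\infty$ as $|x_i|\to\infty$ and the $\varphi_i$ are bounded below, $\min_{|x|=R_0} w$ is attained and finite, and $w - \min_{|x|=R_0}w$ does the job while keeping $\lim_{|x|\to\infty}w=+\infty$. (c) Uniformity in $\rho$ for $\rho$ small is immediate because $\rho$ enters only through $+\rho^2\varphi_3''(x_3)\le 0$, which can only increase $\mathcal L_\rho w$; so a choice working for $\rho=0$ works for all $\rho\ge 0$, which also yields the claim stated in the remark before the lemma that $w$ is Lyapunov for the degenerate operator $\mathcal L$ itself.

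The main obstacle is the bookkeeping in the separated estimate: one must verify that, after summing the three one-dimensional contributions and handling the region where some $|x_i|$ is small while $|x|$ is large (so not all coordinates are in the ``good'' regime of~\eqref{hp_b} simultaneously), the total is still $\ge 1$. This is handled by noting that $|x|\ge R_0$ forces $|x_i|\ge R$ for at least one $i$; the contribution from that coordinate is positive and unbounded, while the contributions from coordinates with $|x_j|\le R$ are bounded (the $\varphi_j$ and their derivatives are bounded on $[-R,R]$ after smoothing), and the cross term $-(A_\rho)_{33}\varphi_3''$ is $\ge 0$. Choosing $\beta_i$ and (in case~(ii)) $C_i$ as above, and then $R_0$ large enough to absorb the bounded leftovers, closes the estimate. \qed
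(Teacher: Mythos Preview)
Your separable ansatz $w=\sum\varphi_i(x_i)$ is exactly the framework the paper uses, but your specific choice of \emph{concave} $\varphi_i(x_i)=|x_i|^{\beta_i}$ with $\beta_i<1$ does not work, and the sentence ``the contribution from that coordinate is positive and unbounded'' is where the argument breaks. Consider the ray $x=(0,0,x_3)$ with $|x_3|\to\infty$. There the cross term vanishes (since $x_1^2+x_2^2=0$), the diffusion term $-\rho^2\varphi_3''(x_3)=\rho^2\beta_3(1-\beta_3)|x_3|^{\beta_3-2}\to0$, and the drift term satisfies only $-b_3\varphi_3'\ge \beta_3 C_3|x_3|^{\alpha+\beta_3-2}$. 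For any $\alpha\in(0,1)$ and any $\beta_3<1$ one has $\alpha+\beta_3-2<0$, so this contribution also tends to $0$, not to $+\infty$. Hence $\mathcal L_\rho w(0,0,x_3)$ converges to a constant depending only on the smoothing of $\varphi_1,\varphi_2$ at the origin, and there is no mechanism in your argument guaranteeing that constant is $\ge 1$. (The same issue afflicts case~(ii): with $\alpha=0$ your ``balancing'' inequality $C_i>1-\beta$ compares two terms that are actually of the \emph{same} sign when $\beta<1$, and in any event their sum is $O(|x_i|^{\beta-2})\to 0$, so no lower bound $\ge 1$ can come from that coordinate alone.)

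The paper's proof uses the same separable structure but with the opposite convexity: it takes $w=\tfrac{1}{12}(x_1^4+x_2^4)+\tfrac12 x_3^2$, so $\varphi_3''\equiv 1>0$. This makes the cross term \emph{bad}, $-4(x_1^2+x_2^2)$, but the point is that the drift in the first two coordinates now gives $-\tfrac13 b_i x_i^3\ge \tfrac{C_i}{3}|x_i|^{2+\alpha}$, which for $\alpha>0$ dominates any quadratic loss. Along the $x_3$-axis, $-b_3 x_3\ge C_3|x_3|^{\alpha}\to+\infty$, so this direction is handled directly. In short, the fix is not to make $\varphi_3$ concave so as to neutralize the cross term, but to make $\varphi_1,\varphi_2$ grow fast enough (degree $4$) so that their drift contributions overwhelm it; the case analysis (which $|x_i|$ are large) then closes as you anticipated. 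Your general outline survives, but the exponents must be chosen with $\beta_1=\beta_2\ge 4$ and $\beta_3\ge 2$ rather than $<1$.
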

\begin{proof}
We set 
$$w:=\frac{(x_1^4+x_2^4)}{12}+\frac{x_3^2}{2}.$$
 Then, there holds
%{\bf TOGLIEREI DA QUI:}
%\begin{equation}\label{D2w}
%Dw=
%\begin{bmatrix}
%\frac{1}{3} \xi_1^3\\
%\frac{1}{3} \xi_2^3\\
% \xi_3
%\end{bmatrix}
%\mbox{ and }D^2w=\begin{bmatrix}
%	 \xi_1^2&0 &0\\
%0&  \xi_2^2&0\\
%0& 0&1
%\end{bmatrix}.
%	\end{equation}
%Hence using \eqref{matrixArho}
%$$-tr(A_{\rho}(\xi)D^2 w(\xi)))=
%-5 (\xi_1^2+\xi_2^2)-\rho^2$$
%$$-b(\xi)Dw=-
%\frac{1}{3}(b_1\xi_1^3+b_2\xi_2^3)- b_3\xi_3$$
%{\bf A QUI}
$$
\mathcal L_{\rho}w=
-5(x_1^2+x_2^2)-\rho^2-\frac{1}{3}(b_1x_1^3+b_2x_2^3)- b_3x_3.
$$
We denote $K_i:=\max_{x_i\in[-R,R]} \vert b_i(x_i)\vert$.

\noindent{\it Case ($i$)}. Assume $\alpha>0$. We want to prove that there exists $R_0$ such that $\mathcal L_\rho w>1$ in $\overline{B(0,R_0)}^C$ for $\rho$ sufficiently small. To this end, we split the arguments in several cases.

\noindent ($I$). If $|x_i|\geq R$ for any $i\in\{1,2,3\}$, then
$$
\mathcal L_\rho w\geq x_1^2(-5+C_1 |x_1|^\alpha/3)+x_2^2(-5+C_2|x_2|^\alpha/3) +C_3|x_3|^\alpha-\rho^2.
$$
Hence, for $|x_1|, |x_2|>R_1:=\max\{(15/C_1)^{1/\alpha}, (15/C_2)^{1/\alpha}, R\}$, $|x_3|\geq R_3:=\max\{C_3^{-1/\alpha}, R\}$, we get:
$\mathcal L_\rho w\geq 1$ for $\rho$ sufficiently small.

\noindent ($II$). If $|x_1|, |x_2|\leq R_1$ and $|x_3|\geq R$, then
$$
\mathcal L_\rho w\geq -10R_1^2-\rho^2-R^3(K_1+K_2)/3+C_3|x_3|^\alpha
$$
(here, we used the relation: $-b_ix_i^3\geq 0$ for $|x_i|\in[R,R_1]$, $i=1,2$). Hence, for $|x_3|\geq \tilde R_3$ with $\tilde R_3$ sufficiently large, taking $\rho$ sufficiently small, we get
$\mathcal L_\rho w\geq 1$.

\noindent ($III$). If $|x_1|\leq R_1$, $|x_2|\geq R_1$ and $|x_3|\geq R$ (and similarly, for $|x_1|\geq R_1$, $|x_2|\leq R_1$ and $|x_3|\geq R$), then
$$
\mathcal L_\rho w\geq -5R_1^2+x_2^2(-5+|x_2|^\alpha/3)-\rho^2-K_1R^3/3+C_3|x_3|^\alpha.
$$
Hence, for $|x_3|\geq \tilde R_3$, we get $\mathcal L_\rho w\geq 1$ for $\rho$ sufficiently small.

\noindent ($IV$). If  $|x_1|\leq R_1$, $|x_2|>R$, $|x_3|<\tilde R_3$ (and similarly for  $|x_1|> R$, $|x_2|\leq R_1$, $|x_3|<\tilde R_3$), then
$$
\mathcal L_\rho w\geq |x_2|^2(-5+C_2|x_2|^\alpha/3)-5R_1^2-\rho^2-K_1R^2/3-K_3R.
$$
Hence, for $|x_2|>R_1$, we get $\mathcal L_\rho w\geq 1$ for $\rho$ sufficiently small.

In conclusion, gluing together all these cases, we accomplish the proof for $\alpha>0$.

\noindent{\it Case ($ii$)}.
%,  then $K_i\geq C_i$ and
%\begin{displaymath}
%\left\{  \begin{array}[c]{ll}
%- b_i(\xi_i)\xi_i^3\geq C_i(\xi_i)^2,   &\mbox{ if } \vert\xi_i\vert>1,\quad i=1, 2  \\
%- b_3(\xi_3)\xi_3\geq C_3,   &\mbox{ if } \vert\xi_3\vert>1,
%  \end{array}\right.
%\end{displaymath}
Assume $\alpha=0$; we want to prove that there exist some constants $C_i$ and a radius $R_0$ such that $\mathcal L_{\rho}w\geq 1 $  in $\overline{B(0,R_0)}^C$.

\noindent ($I$). If $|x_i|>R$ for any $i=1,2,3$, then
$\mathcal L_{\rho}w \geq x_1^2(-5+\frac 1 3 C_1)+x_2^2(-5+\frac 1 3 C_1)+C_3-\rho^2$;
hence, for $C_1, C_2>15$,  $C_3>1$, we have $L_{\rho}w>1$ for $\rho$ sufficiently small.

\noindent ($II$). If $|x_i|<R$ for $i=1,2$ and $|x_3|>R$, then
$\mathcal L_{\rho}w \geq -10R^2-R^3(K_1+K_2)/3-\rho^2+C_3$;
hence, for $C_3>10R^2+R^3(K_1+K_2)/3+1$, we have $\mathcal L_{\rho}w>1$ 
for $\rho$ sufficiently small.

\noindent ($III$). If $|x_1|<R$, $|x_2|>R$ and $|x_3|>R$ (and similarly, for $|x_1|\geq R$, $|x_2|\leq R$ and $|x_3|\geq R$), then
$\mathcal L_{\rho}w \geq-5R^2+x_2^2(-5+C_2/3)-R^2K_1/3-\rho^2+C_3$,
hence, for $C_2>15$, $C_3$ sufficiently large and $\rho$ sufficiently small, we have $\mathcal L_{\rho}w>1$.

\noindent ($IV$). If  $|x_1|\leq R$, $|x_2|>R$, $|x_3|<R$ (and similarly for  $|x_1|> R$, $|x_2|\leq R$, $|x_3|<R$), then
$\mathcal L_{\rho}w \geq -5R^2+x_2^2(-5+C_2/3)-K_1R^2-\rho^2-K_3R$; hence, for $C_2>15$, 
$|x_2|$ sufficiently large and $\rho$ sufficiently small, we have $\mathcal L_{\rho}w>1$.
\end{proof}
%\begin{figure}
%   \caption{\label{graphebi} Graphe $b_i$}
%   \includegraphics[scale=0.4]{prova.jpg}
%\end{figure}
%\includegraphics{prova.jpg} 
\begin{remark}{\rm
Stronger sufficient condition on $b_i$ for the existence of a Lyapunov-like function $w$ satisfying condition (\ref{C20}) could be found using 
$\displaystyle w(x):=\log((x_1^2+x_2^2)^2+x_3^2))$.
}
\end{remark}
%\begin{remark}{\rm
%Note that, in particular, Lemma \ref{Lya} holds for $\rho=0$ then we proved the existence of a function $w$ satisfying condition (\ref{C20}), also for the degenerate starting problem.}
%\end{remark}

\begin{remark}\label{OU}
{\rm
The drifts of the Ornstein-Uhlenbeck operator (i.e., $b(x)=-\gamma x$ for $\gamma>0$) satisfy assumption ($i$) of Lemma~\ref{Lya}.
For further properties of this operator we refer the reader to the monograph \cite{LB}.
}
\end{remark}
In the next proposition we will establish the existence of an invariant measure $m_{\rho}$ 
of the approximating process~\eqref{processrho}. 
This measure will be used in the main theorem of this paper when the invariant measure for the process (\ref{process})
will be obtained as the limit of $m_{\rho}$ as $\rho\rightarrow 0$.
\begin{proposition}\label{thmdelta}
Let $\sigma_{\rho}(x)$ defined by (\ref{matrixsigmarho}) and $b(x)$ be a Lipschitz 
function satisfying (\ref{hp_b}) either with $\alpha>0$ or $\alpha=0$ and $C_i$ sufficiently large.
There exists a unique invariant probability measure $m_{\rho}$ on $\re^3$ for the process~\eqref{processrho}.% $Y_t^{\rho}$  with infinitesimal generator the operator $\mathcal L_{\rho}$ in (\ref{Ldelta}).
\end{proposition}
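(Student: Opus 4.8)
The plan is to realize $m_\rho$ as a limit (as $n\to+\infty$) of invariant measures $m_\rho^n$ for the process \eqref{processrho} reflected in an increasing family of bounded smooth domains $O_n$ (say $O_n=B(0,n)$ for $n>R_0$), and to use the Lyapunov function $w$ from Lemma \ref{Lya} to prevent escape of mass at infinity. Since, by Lemma \ref{strictell}, the operator $-\mathcal L_\rho$ is uniformly elliptic on each $O_n$ with smooth coefficients, the classical Has'min\-ski\v\i / Bensoussan theory (\cite{Ha}, \cite[Sect.~II.4--II.5]{B1}) guarantees a unique invariant probability measure $m_\rho^n$ on $O_n$, whose density solves $\mathcal L_\rho^* m_\rho^n=0$ in $O_n$ with a conormal (Neumann-type) boundary condition on $\partial O_n$, is smooth and strictly positive by the Harnack inequality, and normalized by $\int_{O_n} m_\rho^n\,dx=1$. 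Extending $m_\rho^n$ by zero outside $O_n$, we view $(m_\rho^n)_n$ as a sequence of probability measures on $\re^3$.

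The key step is the tightness estimate. Test the equation $\mathcal L_\rho^* m_\rho^n=0$ against the Lyapunov function $w$: integrating by parts (the boundary terms vanish thanks to the conormal condition satisfied by $m_\rho^n$, after first checking this on a slightly smaller domain and letting it expand, or equivalently using the probabilistic identity $\int \mathcal L_\rho w\,dm_\rho^n = 0$ valid for the reflected invariant measure) gives $\int_{O_n} \mathcal L_\rho w\, dm_\rho^n \le \int_{\partial O_n}(\text{flux of }w)$, which in our reflecting setup is $\le 0$ or bounded; here one must be slightly careful because $w\to+\infty$, so the cleanest route is to argue on the set $\{|x|\ge R_0\}$ where $\mathcal L_\rho w\ge 1$ by \eqref{C2}, splitting $\int \mathcal L_\rho w\,dm_\rho^n$ over $\{|x|<R_0\}$ (where $|\mathcal L_\rho w|\le C$ uniformly in $n$, hence the contribution is bounded by $C$) and $\{R_0\le |x|\}\cap O_n$. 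This yields $m_\rho^n(\{|x|\ge R_0\})\le \int_{\{|x|\ge R_0\}}\mathcal L_\rho w\,dm_\rho^n \le C$ uniformly in $n$; refining the argument with $Tw$ for large $T$, or with the coercivity of $\mathcal L_\rho w$ (which in fact grows at infinity, as the proof of Lemma \ref{Lya} shows, e.g. like $|x_i|^{\alpha+2}$), upgrades this to $\int_{\re^3}(1+\psi(x))\,dm_\rho^n\le C$ for some $\psi\to+\infty$, which is precisely the tightness of the family $(m_\rho^n)_n$. By Prokhorov's theorem a subsequence converges weakly to a probability measure $m_\rho$ on $\re^3$ (no mass is lost to infinity, by tightness).

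It remains to pass to the limit in the equation and establish uniqueness. Since the coefficients of $\mathcal L_\rho$ are locally bounded and smooth, local uniform ellipticity (Lemma \ref{strictell}) plus interior elliptic estimates give local $C^{k}$ bounds on the densities $m_\rho^n$ that are uniform in $n$ on each fixed ball, so along the subsequence $m_\rho^n\to m_\rho$ in $C^2_{loc}$ and the limit density satisfies $\mathcal L_\rho^* m_\rho=0$ in all of $\re^3$, with $m_\rho\ge 0$, $\int m_\rho\,dx=1$; by the strong maximum principle $m_\rho>0$. That $m_\rho$ is an invariant measure for \eqref{processrho} in the sense of \eqref{4.12} then follows by the standard duality argument: for $u_0\in L^\infty$, $u(\cdot,t)=\mathbb E_x u_0(X_t^\rho)$ solves the parabolic problem, and $\frac{d}{dt}\int u(x,t)m_\rho(x)\,dx = -\int (\mathcal L_\rho u)\, m_\rho\,dx = -\int u\, \mathcal L_\rho^* m_\rho\,dx = 0$, where the Lyapunov bound $\int(1+\psi)\,dm_\rho<\infty$ together with the linear growth of $\sigma_\rho$ and $b$ justifies the integrations by parts and the differentiation under the integral. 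For uniqueness: any invariant probability measure has a density solving $\mathcal L_\rho^* m=0$; since $-\mathcal L_\rho$ is a nondegenerate elliptic operator in $\re^3$ whose underlying diffusion is (by Lemma \ref{Lya} with $\rho$ included) positive recurrent — the same Lyapunov function $w$ verifies the drift condition \eqref{Lipintr} for $\mathcal L_\rho$ — the process \eqref{processrho} is irreducible and Harris recurrent, so its invariant probability measure is unique; alternatively, one invokes the uniqueness part of the Has'min\-ski\v\i theory directly, or a Liouville-type argument for $\mathcal L_\rho^*$. I expect the main obstacle to be the rigorous justification of the boundary-term vanishing in the tightness computation — i.e. making precise the identity $\int_{O_n}\mathcal L_\rho w\,dm_\rho^n\le 0$ for the conormal-reflected measure when $w$ is unbounded — which is handled by testing against $\min(w,T)$ (or a smooth truncation), passing to the limit $T\to\infty$ by monotone convergence, and only afterwards letting $n\to\infty$.
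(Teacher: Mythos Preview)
Your overall strategy --- approximate by reflected diffusions on bounded domains $O_n$, use the Lyapunov function to obtain tightness, and pass to the limit --- is exactly the paper's. The gap is in your handling of the boundary term in the tightness step, and the paper's resolution is different from what you sketch.

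When you integrate $\mathcal L_\rho^* m_\rho^n=0$ against a test function~$w$ on~$O_n$, the conormal (Neumann-type) condition \eqref{reflec} kills only \emph{one} of the two boundary integrals arising from integration by parts; the surviving term is
\[
\int_{\partial O_n} m_\rho^n \sum_{i,j}(a_\rho)_{ij}\,\partial_i w\,\nu_j \, ,
\]
which has no obvious sign when $O_n=B(0,n)$ because $\nabla w$ is not parallel to the outward normal of a round sphere. Consequently the assertion that ``the boundary terms vanish thanks to the conormal condition'' is incorrect, and the probabilistic identity $\int \mathcal L_\rho w\,dm_\rho^n=0$ likewise fails unless $w$ satisfies the conormal boundary condition on~$\partial O_n$, which it does not. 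Your truncation fallback $\min(w,T)$ can in principle be made to work, but not as written: composing with a smooth concave $g$ gives $\mathcal L_\rho(g\circ w)\ge g'(w)\mathcal L_\rho w$, and since $g'(w)\to 0$ this does not yield a uniform moment bound without a further argument.

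The paper avoids this entirely by a clean device: it replaces condition~\eqref{C2} by the equivalent condition~\eqref{C2barra} (proved in the Appendix), producing a smooth $\overline w$ with $\mathcal L_\rho\overline w+\chi\overline w=\phi$ where $\chi\ge 0$ has compact support and $\phi\to+\infty$. It then takes $O_n:=\{\overline w<M_n\}$ to be \emph{sublevel sets} of~$\overline w$. On $\partial O_n$ one has $\nabla\overline w=\tfrac{\partial\overline w}{\partial\nu}\,\nu$ with $\tfrac{\partial\overline w}{\partial\nu}\ge 0$, so the surviving boundary term is nonnegative and one obtains cleanly $\int_{O_n} m_\rho^n\,\mathcal L_\rho\overline w\le 0$. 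Substituting $\mathcal L_\rho\overline w=\phi-\chi\overline w$ then gives $\int_{O_n}\phi\,m_\rho^n\le\int_{\mathrm{supp}\,\chi}\chi\overline w\,m_\rho^n\le C$ uniformly in~$n$, which is exactly tightness since $\phi\to+\infty$. Adopting this choice of domains (and invoking~\eqref{C2barra}) would close your argument.
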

\begin{proof}	
As proved in Lemma \ref{strictell}
the operator $\mathcal L_{\rho}$ is uniformly elliptic in each bounded set (but the ellipticity constant degenerates in the whole $\re^3$).
We adapt some techniques introduced by \cite{LM} (see 
also \cite{C} for similar arguments), by considering approximate problems in domains $O_n$ such that 
$O_n\nearrow \re^3$ if $n\rightarrow +\infty$.

Let us recall (see \cite{Lnt} or Lemma~\ref{LemmaL} in the Appendix) that condition~(\ref{C2}) is equivalent to the following one:
\begin{eqnarray}
&&\textrm{there exists a function $\overline w\in C^{\infty}(\re^3)$ such that }\label{C2barra}\\
&&\mathcal L_{\rho}\overline w+\chi\overline w= \phi\quad \textrm{in } \re^3,\qquad \lim_{|x|\to+\infty} \overline w=\infty\notag
\end{eqnarray}
where $\chi\in C^{\infty}_0$ and $\phi\in C^{\infty}$ are suitable functions such that, $\chi>0$ on $B_0$, $\textrm{supp} \chi =\bar B_0$ ($B_0$ is a suitable open set) and $\lim_{|x|\to\infty}\phi=\infty$.
(As a matter of facts, this condition is satisfied by the function~$w$ chosen in the proof of~Lemma~\ref{Lya}-($i$)).\\
% such that $\overline w$ continuous solution of 
%\begin{equation}
%\label{C2barra}
%\mathcal L_{\rho}\overline w(\xi)+\chi\overline w= \phi(\xi)\quad \forall\xi\in\R^n
%\end{equation}
%and $\phi$ diverge if $|\xi|\rightarrow +\infty$ i.e.$\displaystyle\lim_{|\xi|\rightarrow +\infty}\overline w(\xi)=+\infty$ and 
%$\displaystyle\lim_{|\xi|\rightarrow +\infty}\phi(\xi)=+\infty$.\\
We  define
$O_n:=\{x\in\re^3 |\ \overline w(x)<M_n\}$ where $M_n\rightarrow +\infty$ if $n\rightarrow +\infty$
and $M_n$ is not a critical value of $\overline w$. Since $ \overline w\rightarrow +\infty$ if $x\rightarrow +\infty$
then $O_n$ are bounded and smooth and $O_n\nearrow \re^3$.\\
Fix  $\rho>0$ and $n$, the results by Bensoussan~\cite[Section 4]{B1} ensure that there exists an unique invariant measure $m^n_{\rho}$ associated to the diffusion process $X_t^{\rho}$ in $O_n$ with reflecting boundary whose infinitesimal generator is $L_{\rho}$ in $O_n$ with boundary conditions
$$\sum_{i,j}{(a_{\rho})}_{ij}\frac{\partial u}{\partial \nu_{j}}=0\qquad \textrm {on }\partial O_n$$
where $\nu$ denotes the unit outward normal to $\partial O_n$ and 
the matrix $A_{\rho}={(a_{\rho})}_{ij}=\sigma_{\rho}\sigma_{\rho}^T$ as in Lemma \ref{strictell}.\\
The invariant measure $m_{\rho}^n$ satisfies the problem
\begin{eqnarray}\label{mdeltan}
&\ds\mathcal L_{\rho}^*m^n_{\rho}:=-\sum_{i,j}\frac{\partial^2 ({(a_{\rho})}_{ij}m^n_{\rho})}{\partial x_i\partial x_j}+ \sum_{i}\frac{\partial(b_im^n_{\rho})}{\partial x_i}=0\qquad \textrm{in }O_n,\\
&\ds\sum_{ij}\nu_i({\frac{\partial ({(a_{\rho})}_{ij}m^n_{\rho})}{\partial x_j}-
b_im^n_{\rho})}=0\qquad \textrm{on } \partial O_n\label{reflec}\\
&\ds\int_{O_n} m^n_{\rho}=1,\quad  m^n_{\rho}>0.\nn
\end{eqnarray}
We have to prove that, as $n\rightarrow +\infty$, $m_{\rho}$ converges  in some sense to $m_{\rho}$ invariant measure to the process with generator $\mathcal L_{\rho}$, i.e. $m_{\rho}$ solves
\begin{eqnarray}\label{mdelta}
&\ds\mathcal L^*_{\rho}m_{\rho}:=-\sum_{i,j}\frac{\partial^2 ({(a_{\rho})}_{ij}m_{\rho})}{\partial x_i\partial x_j}+ \sum_{i}\frac{\partial (b_im_{\rho})}{\partial x_i}=0\qquad\textrm{in }\re^3\\
&\ds\int_{\re^3} m_{\rho}=1,\quad  m_{\rho}\geq 0.\nn
\end{eqnarray}
From Prohorov Theorem and the fact that $\int_{O^n} m^n_{\rho}=1$ 
we know that $m^n_{\rho}\rightharpoonup m_{\rho}$ as $n\rightarrow +\infty$ (possibly passing to a subsequence).\\
We prove now that $\int_{\re^3} m_{\rho}=1$.
Multiplying equation (\ref{mdeltan}) by $\overline w$ defined in \eqref{C2barra}, integrating on $O^n$ and taking into account (\ref{reflec})
we obtain
$$0= \int_{O^n}\mathcal L_{\rho}^{*}m^n_{\rho}\overline w\,=
\int_{O^n}m^n_{\rho}\mathcal L_{\rho}\overline w +
\int_{\partial O^n}m^n_{\rho}
\sum_{i,j}{(a_{\rho})}_{ij}\frac{\partial \overline w}{\partial x_i}\nu_j.$$
Since $\overline w=M_n$ on $\partial O^n$ and $\overline w<M_n$ on $O^n$,
we have
$\frac{\partial \overline w}{\partial x_i}= \frac{\partial w}{\partial \nu}\nu_i$ and 
$\frac{\partial \overline w}{\partial \nu}\geq 0$ on $\partial O^n$.
Then, there holds
$$0=\int_{O^n}m^n_{\rho}\mathcal L_{\rho} \overline w +
\frac{\partial \overline w}{\partial \nu}\int_{\partial O^n}m^n_{\rho}
\sum_{i,j}{(a_{\rho})}_{ij}\nu_i\nu_j,$$
and since
$\ds\sum_{i,j}{(a_{\rho})}_{ij}\nu_i\nu_j\geq0,$
we obtain 
$\int_{O^n}m^n_{\rho}\mathcal L_{\rho} \overline w \leq 0$.
%Condition (\ref{C2}) is equivalent  to the following condition (PLL notes):\\
%there exist functions $\chi\in C^{\infty}_0$, $\chi>0$ on $B_0$ and $\phi$ such that $\overline w$ continuous solution of 
%\[\mathcal L_{\rho}\overline w(\xi)+\chi\overline w= \phi(\xi)\quad \forall\xi\in\R^n\]
%and $\phi$ diverge if $|\xi|\rightarrow +\infty$ i.e.$\displaystyle\lim_{|\xi|\rightarrow +\infty}\overline w(\xi)=+\infty$ and 
%$\displaystyle\lim_{|\xi|\rightarrow +\infty}\phi(\xi)=+\infty$.\\
Hence
$$\int_{O^n}m^n_{\rho}\mathcal L_{\rho}\overline w 
=
\int_{O^n}(\phi-\chi \overline w) m^n_{\rho}\leq 0,$$
and
\[
\int_{O^n}\phi m^n_{\rho}\leq \int_{supp\chi}\chi \overline w m^n_{\rho}\leq C
\]
where $C$ is a positive constant independent of $n$.
Let us extend $m^n_{\rho}$ by zero outside $O^n$, and call it again $m^n_{\rho}$, then\\
\begin{equation}\label{stimamndelta}
\int_{\R^3}\phi m^n_{\rho}\leq C
\end{equation}
where $C$ is a positive constant independent of $n$.

%From (\ref{stimamndelta})
%$$\int_{O^n}\phi m^n_{\rho}= \int_{O^n\cap B_R^C}\phi m^n_{\rho}+ \int_{O^n\cap B_R}\phi m^n_{\rho}\leq C,$$
%hence from the boundedness of the second integral on the right hand side (possibly increasing $C$ but in a manner still independent of $n$) we have
%\begin{equation}\label{stimaesterna}
%\int_{O^n\cap B_R^C}\phi\,m^n_{\rho}\leq C.
%\end{equation}

Since $\displaystyle\lim_{|x|\rightarrow +\infty}\phi(x)=+\infty$, for any $N$ there exists a $R_N$ such that $\phi(x)>N$ on
$B_{R_N}^C$.
Hence, from (\ref{stimamndelta})
\begin{equation}\label{stimaesternaN}
\int_{B_{R_N}^C}m^n_{\rho}\,\leq \frac{C}{N}.
\end{equation}
%From (\ref{stimaesterna}) passing to the limit with a suitable smooth $\phi$ (null on  $B_{R_N}$)
%we obtain that $m>0$.\\
Since $\ds\int_{\R^3}m^n_{\rho}=1$
%$$1=\int_{O^n\cap B_{R{_N}}^C}m^n_{\rho}\,+ \int_{O^n\cap B_{R{_{N}}}}m^n_{\rho}\,,$$
then from (\ref{stimaesternaN})
$$\int_{B_{R_{N}}}m^n_{\rho}\, \geq 1-\frac{C}{N}$$
and from the weak convergence of $m^n_{\rho}$ to $m_{\rho}$, we have
$$\int_{B_{R_N}}m_{\rho}\, \geq 1-\frac{C}{N},$$
hence letting $N\rightarrow +\infty$
we obtain that $\int_{\re^3} m_{\rho}=1$.\\
Moreover from the local regularity $W^{2,p}$ for any $p>1$ of $m^n_{\rho}$ 
since $m^n_{\rho}$ solves equation (\ref{mdeltan}), passing to the limit we easily obtain that
$m_{\rho}$
solves equation (\ref{mdelta}). 

%where $A_{\delta}^{*}= A^{*}-\delta tr \begin{bmatrix}
%	0 &0&0\\
%0& 0&0\\
%0& 0& 1
%\end{bmatrix}$.
%

\end{proof}
%\begin{remark}
%The equivalence of the conditions ... ... proved by PLL is a consequence of the following well known remark:\\
%Let $a=\{a_n\}_n$ be a sequence of positive numbers such that $\sum_i a_i<\infty$ then, there exists a sequence of positive numbers $b=\{b_n\}_n$, such that $\lim_{n\rightarrow \infty}b_n=\infty$ and $\sum_i a_i b_i<\infty$.\\
%Indeed choosing $b_n= \frac{1}{R_n^{\alpha}}$ where $0<\alpha <1$ is fixed and $R_n= \sum_{k\geq n} a_k$, then  $\lim_n b_n=\infty$ because $\lim_n R_n=0$ and $a_nb_n= \frac{a_n}{R_n^{\alpha}}$ converges because $a_nb_n$ is bounded by $ \int_{R_{n+1}} ^{R_n} 1/t^\alpha dt$.
%\end{remark}

 \begin{remark}{\rm
The condition of strict ellipticity in the compact subsets of~$\re^N$ is sufficient to deduce from (\ref{C2}) the existence of the invariant measure~$m_{\rho}$ (see \cite[Theorem IV.4.1]{Ha} under their assumption B.1). Nevertheless we gave the proof of Proposition~\ref{thmdelta} because it is purely analytic and for the sake of completeness.}
\end{remark}

Now we want to prove that $m_{\rho}$ converges in some sense to $m$, invariant measure to 
the process (\ref{process}), solving
\begin{equation}\label{eqm}
{\cal L}^*m=0,\qquad \int_{\re^3} m\,=1\qquad\textrm{and}\qquad m\geq0.
\end{equation}

\begin{theorem}\label{m}
Let $\sigma$ be defined by (\ref{matrixH}) and $b(y)$ be a continuous 
function satisfying (\ref{hp_b}) either with $\alpha>0$ or $\alpha=0$ and $C_i$ sufficiently large. Then
there exists a unique invariant probability measure $m$ on $\re^3$ for the process~\eqref{process}.% 

\end{theorem}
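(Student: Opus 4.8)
The plan is to obtain $m$ as a weak-$*$ limit, as $\rho\to 0^+$, of the invariant measures $m_\rho$ of the non-degenerate approximations \eqref{processrho} furnished by Proposition~\ref{thmdelta}, and then to prove uniqueness through a strong maximum principle. Since Proposition~\ref{thmdelta} is stated for Lipschitz drifts whereas here $b$ is only continuous, I would first replace $b$ by componentwise mollifications $b^\delta$ (each $b_i$ mollified as a function of $x_i$ alone): these are smooth, converge locally uniformly to $b$, and still verify \eqref{hp_b} with constants uniform for $\delta$ small. Proposition~\ref{thmdelta} then provides a unique invariant probability $m_\rho^\delta$ for each $\rho,\delta>0$ (one may also couple $\delta=\delta(\rho)\to 0$ to reduce everything to a single limit). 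The crucial point is a \emph{tightness estimate uniform in $\rho$ and $\delta$}: repeating the computation in the proof of Proposition~\ref{thmdelta} with the $\rho$-independent Lyapunov function $w$ of Lemma~\ref{Lya} (which satisfies the required inequality for every small $\rho$, and indeed already for $\rho=0$), one tests the equation $(\mathcal L^\delta_\rho)^* m_\rho^\delta=0$ against $w$ — after the standard truncation of $w$ by compactly supported functions — and arrives at $\int_{\re^3}\phi\, m_\rho^\delta\le C$ with $C$ independent of $\rho$ and $\delta$ and with $\phi(x)\to+\infty$ as $|x|\to\infty$. Hence $\sup_{\rho,\delta}m_\rho^\delta(B_R^C)\to 0$ as $R\to\infty$; by Prohorov's theorem some subsequence of the $m_\rho^\delta$ converges weakly-$*$ to a probability measure $m$ on $\re^3$ with $\int_{\re^3}m=1$, no mass escaping to infinity.

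Next I would pass to the limit in the equation. Here the special structure is essential: by \eqref{matrixArho}, $A_\rho$ differs from $\sigma\sigma^T$ only in the $(3,3)$-entry, by $\rho^2$, so for every $\varphi\in C_c^\infty(\re^3)$ one has $\mathcal L^\delta_\rho\varphi=\mathcal L\varphi-\rho^2\partial_{x_3x_3}\varphi+(b^\delta-b)\cdot D\varphi$. Testing $(\mathcal L^\delta_\rho)^* m^\delta_\rho=0$ against $\varphi$ gives
$$\int m^\delta_\rho\,\mathcal L\varphi=\rho^2\int m^\delta_\rho\,\partial_{x_3x_3}\varphi-\int m^\delta_\rho\,(b^\delta-b)\cdot D\varphi ,$$
whose right-hand side tends to $0$ (the measures have mass $1$, $\partial_{x_3x_3}\varphi$ is bounded, and $b^\delta\to b$ uniformly on $\operatorname{supp}\varphi$), while the left-hand side tends to $\int m\,\mathcal L\varphi$ because $\mathcal L\varphi$ is bounded and continuous (the only unbounded term, $b\cdot D\varphi$, is supported in $\operatorname{supp}\varphi$). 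Thus $\mathcal L^* m=0$ in the sense of distributions; since $\mathcal L$, hence $\mathcal L^*$, satisfies H\"ormander's condition, the subelliptic regularity recalled in the Introduction (see \cite{Bo}) makes $m$ smooth, and $m$ solves $\mathcal L^* m=0$, $m\ge 0$, $\int_{\re^3}m=1$. By the characterization of the invariant measure recalled in the Introduction this establishes existence.

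For uniqueness, let $m_1,m_2$ be two invariant probabilities. By the subelliptic regularity and Bony's strong maximum principle for $\mathcal L^*$ (whose principal part is the same H\"ormander sum of squares), each $m_i\in C^\infty(\re^3)$ and $m_i>0$. Writing $g:=m_1/m_2>0$ and subtracting $g$ times the equation for $m_2$ from the equation for $m_1$, one finds that $g$ solves $\mathcal M g=0$ for a subelliptic operator $\mathcal M$ with the same principal part and no zeroth-order term (so $\mathcal M$ annihilates constants); moreover $\int(g-1)\,m_2=\int m_1-\int m_2=0$, so $g-1$ changes sign unless $g\equiv 1$. If the infimum of $g$ over $\re^3$ is attained, the strong maximum principle for $\mathcal M$ forces $g$ to be constant, hence $g\equiv 1$ and $m_1=m_2$. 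I expect the main obstacle here — the analogue of the difficulty in the degenerate passage to the limit above — to be the exclusion of the case in which the relevant extremum of $g$ (like the mass of the $m^\delta_\rho$'s) escapes to infinity; this is again controlled by the Lyapunov function of Lemma~\ref{Lya} (valid already for $\rho=0$, giving $\mathcal L w\ge 1$ outside a ball), after which the strong maximum principle closes the argument and gives $m_1=m_2$.
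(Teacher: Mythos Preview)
Your existence argument is essentially the paper's: pass to the weak-$*$ limit of the $m_\rho$'s from Proposition~\ref{thmdelta}, use the Lyapunov function (via the equivalent formulation~\eqref{C2barra}) to get the uniform bound $\int\phi\,m_\rho\le C$ and hence tightness, then pass to the limit in $\int m_\rho\,\mathcal L_\rho\psi=0$ using $\mathcal L_\rho\psi\to\mathcal L\psi$ in $L^\infty$. Your extra mollification $b^\delta\to b$ is a reasonable precaution, since Proposition~\ref{thmdelta} is indeed stated for Lipschitz $b$ while Theorem~\ref{m} only assumes continuity; the paper does not explicitly address this discrepancy.

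For uniqueness the paper takes a different and much shorter route: it simply invokes the results of Arnold--Kliemann~\cite{AK} and Ichihara--Kunita~\cite{IK}, which give uniqueness of the invariant probability for hypoelliptic diffusions. Your attempted direct PDE argument via $g=m_1/m_2$ is more ambitious but has the gap you yourself flag, and your proposed fix does not close it. The operator $\mathcal M$ that $g$ satisfies has the form $-\mathrm{tr}(A D^2 g)+\tilde B\cdot Dg$ with $\tilde B=B-2A\,D\log m_2$; the Lyapunov function $w$ of Lemma~\ref{Lya} gives $\mathcal L w\ge 1$ outside a ball, but $\mathcal L$ and $\mathcal M$ differ precisely in the drift, by a term $2A\,D\log m_2$ that you have no control over at infinity. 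So there is no reason $w$ (or any function built from it) should be a Lyapunov function for $\mathcal M$, and hence no mechanism to prevent the infimum of $g$ from escaping to infinity. Absent an independent decay estimate on $Dm_i/m_i$, this direct route seems hard to complete, which is presumably why the paper defers to~\cite{AK,IK}.
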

\begin{proof}
The uniqueness of the measure $m$ comes from the results of Arnold, Klieman \cite{AK}, or Ichihara, Kunita
\cite{IK}.

The existence of the invariant measure it is obtained proving that the invariant measure $m_{\rho}$ of 
Proposition \ref{thmdelta} converges, if $\rho$ tends to $0$,  to the measure $m$ associated to the process (\ref{process}).
We proceed analogously to Proposition \ref{thmdelta}.
The measure $m_{\rho}$ satisfies the following conditions:
\begin{equation}\label{mdelta1}
\mathcal L_{\rho}^{*}m_{\rho}=0\qquad\textrm{in }\re^3,\qquad \int_{\re^N} m_{\rho}=1,\qquad  m_{\rho}\geq 0.
\end{equation}
We know that 
$m_{\rho}\rightharpoonup m$ as $\rho\rightarrow 0$ (at least for a subsequence) where 
$m$ is a measure.
We have to prove that $m$ is an invariant measure to 
the process (\ref{process}) i.e. that $m$ solves (\ref{eqm}).\\ 
From condition (\ref{C2}) and the equivalent conditions (\ref{C2barra}), we know that
there exists smooth functions $\chi$ and $\phi$ such that $\overline w$ satisfies 
$\mathcal L_{\rho}\overline w+\chi\overline w= \phi$, in $\re^3$, $\overline w$ and $\phi$ such that
$\rightarrow +\infty$ if $|x|\rightarrow +\infty$ and $\chi$ has compact support.

Multiplying equation (\ref{mdelta}) by such $\overline w$ and integrating on $\re^3$ we obtain
$$0= \int_{\re^3}\mathcal L_{\rho}^{*}m_{\rho}\overline w\,=
\int_{\re^3}\mathcal L_{\rho}\overline w m_{\rho}=
\int_{\re^3}(\phi-\chi\overline w) m_{\rho},$$
hence
\begin{equation}\label{stimamdelta}
\int_{\re^3}\phi\,m_{\rho}= \int_{supp\,\chi}\,\chi\,\overline w\,m_{\rho}\leq C,
\end{equation}
where $C$ is a positive constant independent of $\rho$.
From (\ref{stimamdelta}),
since
$$1=\int_{B_{R_N}}m_{\rho}\,+ \int_{B_{R_N}^C}m_{\rho}\,,$$
then
$$\int_{B_{R_N}}m_{\rho}\, \geq 1-\frac{C}{N}$$
and from the convergence of $m_{\rho}$
$$\int_{B_{R_N}}m\, \geq 1-\frac{C}{N},$$
hence letting $N\rightarrow +\infty$
we obtain $
\int_{\re^3}m=1$.\\
To prove that
$\mathcal L^{*}m=0$ we write, for any $\psi$ smooth, 
$$
0=\int_{\re^3}\mathcal L_{\rho}^{*}m_{\rho}\psi\,=
\int_{\re^3}\mathcal L_{\rho}\psi m_{\rho}\rightarrow 
\int_{\re^3}\mathcal L\psi m= \int_{\re^3}\mathcal L^{*}m\psi\,.$$
Taking account that 
$\mathcal L_{\rho}\psi \rightarrow \mathcal L\psi$ strongly and 
$m_{\rho} \rightharpoonup m$ weakly in $L^1$.
%(senno' si puo' fare con la definizione di misura invariante e argomenti di p.25 Lions Musiela).\\
%To prove that $m>0$ we use again
% (\ref{stimamdelta})
%$$\int_{\re^3}\phi\,m_{\rho}= \int_{B_R^C}\phi\,m_{\rho}+ \int_{B_R}\phi\,m_{\rho},$$
%hence from the boundedness of the second integral on the right hand side we have
%%\begin{equation}\label{stimaesterna}
%%\int_{B_R^C}f(\xi)m_{\delta}\leq C,
%%\end{equation}
%C independent of $\rho$.
%Since $\phi\rightarrow +\infty$ if $|\xi|\rightarrow +\infty$, for any $N$ there exists a $R_N$ such that $\phi>N$ on
%$B_{R_N}^C$.
%Hence, from (\ref{stimaesterna})
%%\begin{equation}\label{stimaesterna}
%%\int_{B_{R_N}^C}m_{\delta}\,\leq \frac{C}{N}.
%%\end{equation}
%passing to the limit with a suitable smooth $\psi$ (null on  $B_{R_N}$)
%we obtain that $m>0$.
\end{proof}
%\begin{remark}{\rm  si mette??
%Following the idea of LionsMusiela p.14-15 it is possible to prove that if $b$ has at most linear growth
%the existence of an invariant measure on $\re^N$ for the process (\ref{process}) is a sufficient condition for 
%the validity of condition  (\ref{C2}).}
%\end{remark}

%
%Liouville
%
\section{A Liouville type result}\label{sect:liou}
In this section, we establish a Liouville type result, which holds true not only in the Heisenberg setting but also for $\sigma$ whose columns satisfy the general H\"ormander condition.
This result will be stated in Proposition \ref{Liouville}.
Although 
in the proof of Theorems \ref{conv1} 
and \ref{longtime} below
it will be applied to the particular case of a regular solution,  Proposition \ref{Liouville} contains a general 
statement which has its own independent  interest.

Let us first recall from~\cite{Ho} the definition of H\"ormander condition.

\begin{definition}\label{Hor}
The vector fields 
$ X_j$,  $j=1,\dots m$, satisfy the H\"ormander condition if 
 $X_1,\dots X_m$ and their commutators of any order span $\re^N$ at each point of $\re^N$.
\end{definition}
\begin{proposition}\label{Liouville}
Consider the problem
\begin{equation}\label{L}
\mathcal LV=-tr(\sigma(x)\sigma^T(x)D^2V)-b(x)\cdot DV=0,\quad x\in\re^N
\end{equation}
where $\sigma$ and $b$ are smooth functions
and the vector fields
$ X_j=\sigma^j\cdot\nabla,\,  j=1,\dots m$ satisfy the H\"ormander condition as in definition (\ref{Hor}).
Assume that there exists $w(x)\in C^{\infty}(\re^N)$ and $R_0>0$ such that 
%in the sense of viscosity
\begin{equation}\label{lyap}
\mathcal L w \geq 0\quad \textrm{in }  \overline{B(0,R_0)}^C,\qquad
w(x)\rightarrow +\infty\quad \textrm{as } |x|\rightarrow +\infty.
\end{equation}
Then:
\begin{itemize}
\item[(i)] every viscosity subsolution $V\in USC(\re^N)$ to (\ref{L}) such that $\displaystyle \limsup_{|x|\rightarrow +\infty}\frac{V}{w}\leq 0$
is constant;
\item[(ii)] every viscosity supersolution $V\in LSC(\re^N)$ to (\ref{L}) such that $\displaystyle \liminf_{|x|\rightarrow +\infty}\frac{V}{w}\geq 0$ is constant.
\end{itemize}
\end{proposition}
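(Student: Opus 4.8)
The plan is to prove statement (i); statement (ii) then follows by applying (i) to $-V$, which is a viscosity subsolution since $\mathcal L$ is linear, and which satisfies $\limsup_{|x|\to\infty}(-V)/w\le 0$ precisely when $\liminf_{|x|\to\infty}V/w\ge 0$. So fix a viscosity subsolution $V\in USC(\re^N)$ of $\mathcal L V=0$ with $\limsup_{|x|\to\infty}V/w\le 0$. The first step is a regularity reduction: since the vector fields $X_j$ satisfy the H\"ormander condition and $\sigma,b$ are smooth, the operator $\mathcal L$ is hypoelliptic, and moreover enjoys the strong maximum principle in the subelliptic sense (this is the property invoked after \eqref{op_l} via \cite{Bo}). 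Hence any viscosity subsolution that attains an interior maximum over a connected open set must be constant there. The strategy is therefore to show that $V$ attains a global maximum on $\re^N$.

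The heart of the argument is a barrier/comparison step using $w$. Fix $\varepsilon>0$ and consider $V_\varepsilon:=V-\varepsilon w$ on $\overline{B(0,R_0)}^C$. Because $\mathcal L$ is linear and $\mathcal L w\ge 0$ on $\overline{B(0,R_0)}^C$, the function $V_\varepsilon$ is a viscosity subsolution of $\mathcal L V_\varepsilon\ge 0$ there; more precisely $\mathcal L V_\varepsilon = \mathcal L V - \varepsilon \mathcal L w \le 0$ in the viscosity sense, i.e. $V_\varepsilon$ is a subsolution. The growth hypothesis $\limsup_{|x|\to\infty}V/w\le 0$ together with $w\to+\infty$ gives $V_\varepsilon(x)\to-\infty$ as $|x|\to\infty$, so $\sup_{\overline{B(0,R_0)}^C}V_\varepsilon$ is finite and, by upper semicontinuity, is attained either at some point of $\overline{B(0,R_0)}^C$ or on $\partial B(0,R_0)$. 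I would then argue that the maximum cannot be attained at an interior point of $\overline{B(0,R_0)}^C$ unless $V_\varepsilon$ is constant on the (connected) exterior domain: at an interior maximum point $x_0$, the constant function equal to $V_\varepsilon(x_0)$ touches $V_\varepsilon$ from above, so the subsolution inequality forces, via the strong maximum principle for the hypoelliptic operator $\mathcal L$, that $V_\varepsilon\equiv V_\varepsilon(x_0)$ on $\overline{B(0,R_0)}^C$; but that contradicts $V_\varepsilon\to-\infty$. Hence $\max_{\overline{B(0,R_0)}^C}V_\varepsilon=\max_{\partial B(0,R_0)}V_\varepsilon$, that is,
\[
V(x)\le \varepsilon w(x)+\max_{\partial B(0,R_0)}(V-\varepsilon w)\le \varepsilon w(x)+\max_{\partial B(0,R_0)}V\qquad\text{on }\overline{B(0,R_0)}^C.
\]
Letting $\varepsilon\to0^+$ (for each fixed $x$, with $w(x)$ finite) yields $V(x)\le \max_{\partial B(0,R_0)}V$ for all $|x|\ge R_0$. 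Since trivially $\sup_{\overline{B(0,R_0)}}V<\infty$, we conclude $M:=\sup_{\re^N}V<\infty$ and, because $V\in USC$ and the sup over the compact ball plus the bound on the exterior are both attained/bounded, $M$ is in fact attained at some point $x^\ast\in\re^N$.

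With a global maximum point $x^\ast$ in hand, the final step is immediate: the constant $M$ touches $V$ from above at $x^\ast$, and $\mathcal L M=0$, so $V$ is a viscosity subsolution with an interior maximum; the strong maximum principle for the subelliptic operator $\mathcal L$ (valid thanks to the H\"ormander condition, cf. \cite{Bo}) forces $V\equiv M$ on all of $\re^N$, which is the claim. The main obstacle I anticipate is the strong maximum principle itself in the viscosity + hypoelliptic setting: one needs that a viscosity subsolution of $\mathcal L V\ge 0$ attaining an interior maximum is locally constant, along connected sets, for an operator that is only subelliptic (degenerate elliptic with H\"ormander bracket-generating columns). This is where the structure of the Heisenberg/H\"ormander vector fields is essential — propagation of the maximum along the horizontal vector fields and their iterated brackets — and it is exactly the point the authors reference through \cite{Bo}; all the other steps (the linear barrier computation with $\varepsilon w$, the coercivity from $w\to+\infty$, the $\varepsilon\to0$ limit, and the reduction of (ii) to (i)) are routine.
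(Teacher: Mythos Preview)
Your proof is correct and follows essentially the same approach as the paper: penalize $V$ by $\varepsilon w$, show the maximum of $V-\varepsilon w$ on the exterior domain is attained on $\partial B(0,R_0)$, let $\varepsilon\to 0$ to conclude that $V$ attains a global maximum, and finish via the strong maximum principle for viscosity subsolutions of H\"ormander-type operators (the paper invokes Bardi--Da~Lio \cite{BDL} rather than Bony \cite{Bo} for this viscosity version). The only cosmetic differences are that the paper justifies the subsolution property of $V-\eta w$ through a local comparison argument instead of your direct test-function shift $\psi\mapsto\psi+\varepsilon w$ (yours is cleaner, since $w$ is smooth and $\mathcal L$ is linear), and it reaches the boundary-maximum conclusion via the weak maximum principle on an annulus $B(0,R_1)\setminus\overline{B(0,R_0)}$ rather than the strong maximum principle on the full exterior.
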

\begin{proof}
The proof uses the same arguments as in~\cite{LM} (see also~\cite[Lemma 4.1 and remark 4.1]{BCM}). For the sake of completeness, we shall give the proof of case ($i$); being similar, the proof of case ($ii$) is omitted.

Let us first observe that if $\psi\in\mathcal C^2(A)$ ($A$ is any open set $A\subset \overline{B(0,R_0)}^C$) is a classical supersolution in A, i.e.
\[\mathcal L \psi \geq 0 \mbox{ in } A
\]
then $w+\psi$ is a viscosity supersolution  in A, i.e.
\[\mathcal L (w+\psi) \geq 0 \mbox{ in } A.
\]
Define for each $\eta>0$:
$$V_{\eta}:=V(x)-\eta w(x).$$
We claim that $V_{\eta}$  is a viscosity subsolution in $\overline{B(0,R_0)}^C$ i.e.
\begin{equation}\label{cl:liou}
\mathcal L(V_{\eta})\leq 0  \textrm{ in } \overline{B(0,R_0)}^C.
\end{equation}
Indeed, let us assume by contradiction that there exists $\psi\in\mathcal C^2(\overline{B(0,R_0)}^C)$  such that  $V_{\eta}-\psi$ attains a strict maximum in some point~$\overline x\in \overline{B(0,R_0)}^C$,  
$V(\overline x)=\eta w(\overline x)+\psi(\overline x)$, 
and that there holds
$$\mathcal L(\psi)(\overline x)> 0.$$
By the the continuity of the coefficients of $L$, and the regularity of $\psi$ there exists a $r_0>0$
 such that
\begin{equation}\label{psi}
\mathcal L(\psi)(x)> 0\textrm{ in }\ B(\overline x, r_0)\subset \overline{B(0,R_0)}^C.
\end{equation}
As remarked above 
$\eta w+\psi$ is a supersolution in $B(\overline x, r_0)$.
Moreover there exists $\alpha>0$ such that $V(x)< \eta w(x)+\psi(x)-\alpha$ for any $x\in \partial B(\overline x, r_0)$.
Then by a local comparison principle (see \cite{BDL} or \cite{Bo} for classical solutions), $V(x)\leq \eta w(x)+\psi(x)-\alpha$ in $B(\overline x, r_0)$ and for $x=\overline x$ we get a contradiction and our claim~\eqref{cl:liou} is proved.

Thanks to $V_{\eta}\rightarrow -\infty$ as $|x|\rightarrow +\infty$, there exist $R_1(\eta)=R_1>R_0$ such that
$$V_{\eta}(x)\leq \sup_{|z|=R_0}V_{\eta}(z),\quad \forall |x|\geq R_1$$
then, using the weak maximum principle applied to $V_{\eta}$, 
\[
\max_{B(0,R_1)\backslash \overline{B(0,R_0)}}V_{\eta}=\max_{\partial B(0,R_0)}V_{\eta}
\]
and this implies that
\[
V_{\eta}(x)\leq\max_{\partial B(0,R_0)}V_{\eta}, \quad \forall x\in \overline{B(0,R_0)}^C.
\]
Letting $\eta\rightarrow 0$ in the preceding inequality:
\[
V(x)\leq\max_{\partial B(0,R_0)}V, \quad \forall x\in \overline{B(0,R_0)}^C.
\]
Therefore $V$ attains its global maximum so it is a constant by the strong maximum principle established by Bardi and Da Lio~\cite[Corollary 3.2]{BDL}. 
\end{proof}
\begin{remark}{\rm
Note that, in the Heisenberg group, the function $w$ introduced in Lemma
\ref{Lya} satisfies assumptions \eqref{lyap}. }
\end{remark}
\begin{remark}{\rm
Let us stress that the above arguments work for any linear operator $\mathcal L$ with continuous coefficients, satisfying a local comparison principle and a strong maximum principle.}
\end{remark}
\begin{remark}{\rm 
Note that conditions on the sub and super solutions in i) and ii) imply the boundedness of the sub and super solutions.}
\end{remark}
\section{Applications}\label{Appl}
In this section we provide two applications of the previous results. In both cases we will use the existence of the invariant measure for the process (\ref{process}) proved in Section \ref{IM}  and the Liouville type property obtained in Section \ref{sect:liou}.
Summarizing, we shall prove that
$$
\lim_{\delta\rightarrow 0}\delta u_{\delta}(x)=  
\lim_{t\to +\infty}u(t, x)=
\lim_{t\to +\infty}\frac{v(t,x)}{t}=
\int_{\re^3}f(x)dm(x),$$
where $m$ is the invariant measure of Section \ref{IM} and $u_{\delta}$, $u$ and $w$ are the solutions respectively of
$$\delta u_{\delta}(x)+{\mathcal L}u=f(x),  \qquad\textrm{in }\re^3,$$
$$
u_t+{\mathcal L}u=0,\quad u(0,x)=f(x),  \qquad \textrm{in }(0,+\infty)\times\re^3,$$
$$
v_t+{\mathcal L}v=f,\quad
v(0,x)=0,  \qquad \textrm{in }(0,+\infty)\times\re^3,$$
and ${\mathcal L}$ is the infinitesimal generator of the process (\ref{process}), i.e. the operator defined in (\ref{op_l}).

\subsection{The ergodic problem}\label{sectergodic}
In this section we tackle the following ergodic problem. We consider the family of problems 
\begin{equation}\label{probcellappross}
\delta u_{\delta}(x)-tr(\sigma(x)\sigma^T(x)D^2 u_{\delta})-b(x)Du_{\delta}=f(x)\qquad \textrm{in }\re^3,
\end{equation}
where  
$\delta>0$
and we investigate about the convergence as $\delta\rightarrow 0$ of $\delta u_\delta$ to a constant $\lambda$ called the ergodic constant.
Throughout this section, we assume
\begin{itemize}
\item[($A_1$)] $\sigma$ is defined in ~\eqref{matrixH}
\item[($A_2$)] $b\in C^\infty(\re^3)$ and satisfies the hypotheses of Lemma~\ref{Lya}
\item[($A_3$)] $f\in C^\infty(\re^3)\cap L^\infty(\re^3)$ 
\end{itemize}
The next two Lemma contain several properties of $u_\delta$ which will be used later on.
\begin{lemma}\label{exudelta0}
Under Assumptions~$(A_1)$-$(A_3)$, there exists an unique smooth viscosity solution $u_{\delta}$ of the approximating problem 
(\ref{probcellappross}) such that
\begin{equation}\label{sublin}
|u_{\delta}(x)|\leq \frac C\delta, \qquad \forall x\in \re^3,
\end{equation}
for some positive constant~$C$ independent of $\delta$.
\end{lemma}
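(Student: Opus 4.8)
The plan is to obtain $u_\delta$ as the unique bounded viscosity solution of \eqref{probcellappross}, and to read off \eqref{sublin} from explicit constant barriers. First I would note that the constants $U^\pm:=\pm\|f\|_{L^\infty(\re^3)}/\delta$ are, respectively, a super- and a subsolution of \eqref{probcellappross}, since $\delta U^\pm-\mathrm{tr}(\sigma\sigma^TD^2U^\pm)-b\cdot DU^\pm=\pm\|f\|_{L^\infty}$, which dominates (resp. is dominated by) $f$. Hence, once a comparison principle in the class of bounded sub/supersolutions is available, every bounded viscosity solution $u_\delta$ satisfies $U^-\le u_\delta\le U^+$, which is precisely \eqref{sublin} with $C=\|f\|_{L^\infty}$.

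For existence I would reuse the double approximation already exploited for the invariant measure. Fix $\rho>0$ and $R>0$ and solve the uniformly elliptic Dirichlet problem $\delta u+\mathcal L_\rho u=f$ in $B(0,R)$, $u=0$ on $\partial B(0,R)$; by Lemma \ref{strictell} the operator $\mathcal L_\rho$ is uniformly elliptic on $B(0,R)$ with smooth coefficients, so classical $L^p$/Schauder theory (see \cite{Bo}) yields a unique smooth $u^R_{\delta,\rho}$, and the classical maximum principle gives $\|u^R_{\delta,\rho}\|_{L^\infty}\le\|f\|_{L^\infty}/\delta$ uniformly in $R$. Since the ellipticity constant $a(K,\rho)$ of Lemma \ref{strictell} stays bounded below on each fixed compact $K$, interior estimates for $\mathcal L_\rho$ are uniform in $R$, so along a subsequence $u^R_{\delta,\rho}\to u_{\delta,\rho}$ in $C^2_{\mathrm{loc}}(\re^3)$, a smooth solution of $\delta u+\mathcal L_\rho u=f$ on all of $\re^3$ with the same bound. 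Letting $\rho\to0$, the key point is that the principal parts $X_1^2+X_2^2+\rho^2\partial_{33}^2$ all satisfy the H\"ormander condition with constants independent of $\rho$ (the vanishing extra square only helps), so the subelliptic a priori estimates of \cite{Bo} give $C^\infty_{\mathrm{loc}}$ bounds on $u_{\delta,\rho}$ uniform in $\rho$; by Ascoli and stability of viscosity solutions, $u_{\delta,\rho}\to u_\delta$ locally uniformly along a subsequence, $u_\delta$ is a bounded viscosity solution of \eqref{probcellappross}, it is of class $C^\infty$ by the same hypoellipticity, and \eqref{sublin} passes to the limit. (Alternatively, $u_\delta$ could be produced directly by Perron's method from the barriers $U^\pm$ once comparison is established, and then smoothed by hypoellipticity.)

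It remains to prove comparison in the class of bounded functions, which simultaneously yields uniqueness and completes the bound. Let $u$ (bounded, USC) be a viscosity subsolution and $\bar u$ (bounded, LSC) a supersolution of \eqref{probcellappross}, and suppose $m:=\sup_{\re^3}(u-\bar u)>0$. I would use the smooth nonnegative coercive function $\bar w(x):=(x_1^4+x_2^4)/12+x_3^2/2$ of Lemma \ref{Lya}: from $\mathcal L\bar w\ge1$ on $\overline{B(0,R_0)}^C$ and the continuity of $\mathcal L\bar w$ on the compact $\overline{B(0,R_0)}$ one gets $\mathcal L\bar w\ge-M_0$ on $\re^3$ for some $M_0\ge0$, hence $\bar u+\epsilon\bar w$ is a viscosity supersolution of $\delta z+\mathcal L z=f-\epsilon M_0$ for every $\epsilon>0$. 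Since $\bar u+\epsilon\bar w\to+\infty$ at infinity while $u$ is bounded above, $\mu_\epsilon:=\sup_{\re^3}(u-\bar u-\epsilon\bar w)$ is attained, and $\mu_\epsilon>0$ for $\epsilon$ small (test at a near-maximizer of $u-\bar u$). Doubling the variables with the penalization $\tfrac{\beta}{2}|x-y|^2$ and applying the Crandall--Ishii lemma, the local Lipschitz continuity of $b$ and $f$ together with the globally Lipschitz entries of $\sigma$ force the second-order, drift and source increments to vanish as $\beta\to+\infty$, leaving $\delta\mu_\epsilon\le\epsilon M_0$; letting $\epsilon\to0$ gives $m\le0$, a contradiction, so $u\le\bar u$. (One may instead invoke directly the comparison and strong maximum principles of \cite{BDL} and \cite{Bo}.)

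I expect the main obstacle to be exactly the passage to the whole of $\re^3$ with a degenerate operator and unbounded drift: one must combine interior regularity estimates uniform in the exhausting radius $R$ with subelliptic estimates uniform in the degeneration parameter $\rho$, the latter relying on the fact that the H\"ormander condition for $X_1^2+X_2^2$ survives the limit $\rho\to0$. By contrast, the bound \eqref{sublin} and the comparison/uniqueness step are comparatively routine.
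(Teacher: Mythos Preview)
Your proposal is correct, but it is far more elaborate than the paper's own argument, which consists of three lines: uniqueness is simply attributed to the comparison principle of Bony~\cite{Bo}; the constants $w^\pm=\pm C/\delta$ (with $C\geq\|f\|_{L^\infty}$) are observed to be a super- and a subsolution; and Perron's method then yields a solution satisfying~\eqref{sublin}. Smoothness is implicit from hypoellipticity. You actually mention both of these shortcuts yourself in parentheses --- Perron's method for existence and the direct invocation of~\cite{Bo},~\cite{BDL} for comparison --- so you clearly saw the simpler route.

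What your longer argument buys is self-containment: the double approximation $R\to\infty$, $\rho\to0^+$ gives a constructive existence proof that does not rely on Perron's method working out of the box for this degenerate, unbounded-coefficient setting, and your Lyapunov-based comparison (penalizing with $\epsilon\bar w$ to force interior maxima, then doubling variables) makes explicit why comparison holds on all of~$\re^3$ despite the unbounded drift, rather than leaving that to a citation. The paper, by contrast, is content to defer these points to the literature and keep the lemma's proof minimal, reserving the approximation machinery for the invariant-measure construction where it is genuinely needed. Either approach is valid; yours is more careful, theirs more economical.
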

\begin{proof}
The uniqueness follows from the comparison principle proved in \cite{Bo}.
By assumption~($A_3$) it is easy to see that
$w^{\pm}=\pm\frac{C}{\delta}$ with $C$ sufficiently large is respectively a 
 supersolution and a subsolution for problem 
(\ref{probcellappross}).
In conclusion, applying Perron's method, we infer the existence of a solution to~\eqref{probcellappross} verifying~(\ref{sublin}).
\end{proof}
\begin{lemma}\label{lemmaholdervdelta}
Under assumptions ($A_1$)-($A_3$), the functions $v_{\delta}:=\delta u_{\delta}$, where $u_{\delta}$ 
is the solution of problem
(\ref{probcellappross}), are locally uniformly H\"older continuous. Namely, there exists $\alpha\in(0,1)$ such that for every compact $K\subset \re^3$ there exists a constant $N$ such that 
\begin{equation}\label{holdervdelta}
|v_{\delta}(x_1)-v_{\delta}(x_2)|\leq N|x_1-x_2|^{\alpha}, \quad \forall x_1,x_2\in K,\ \forall \delta\in(0,1).
\end{equation}
The constant $N$ only depends on $K$ and on the data of the problem (in particular is independent of $\delta$).
\end{lemma}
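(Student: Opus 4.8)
The plan is to establish the uniform interior Hölder estimate via the classical elliptic regularity theory of Bony \cite{Bo} adapted to Hörmander-type operators, combined with the uniform $L^\infty$ bound already available. First I would fix a compact set $K\subset\re^3$ and choose a slightly larger bounded open set $K'$ with $K\Subset K'$. By Lemma~\ref{exudelta0}, the functions $v_\delta=\delta u_\delta$ are bounded on $\re^3$ uniformly in $\delta\in(0,1)$: indeed $|v_\delta|=\delta|u_\delta|\leq C$ with $C$ independent of $\delta$. Next, observe that $v_\delta$ solves
\[
\delta v_\delta-tr(\sigma\sigma^TD^2v_\delta)-b\cdot Dv_\delta=\delta f\quad\textrm{in }\re^3,
\]
so on $K'$ the right-hand side $\delta f$ is bounded uniformly in $\delta$ (using $(A_3)$), and the zeroth order coefficient $\delta$ lies in $[0,1]$. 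Thus $v_\delta$ is, uniformly in $\delta$, a bounded (viscosity, hence by \cite{Bo} classical) solution of a subelliptic equation with smooth coefficients whose principal part $\sum_{i=1}^2X_i^2$ satisfies the Hörmander condition, with bounded data on $K'$.

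The key step is then to invoke the a priori interior estimate for such operators: for solutions of $-tr(\sigma\sigma^TD^2v)-b\cdot Dv+cv=g$ on $K'$ with $\|c\|_{L^\infty(K')}$, $\|g\|_{L^\infty(K')}$, $\|v\|_{L^\infty(K')}$ controlled, one has a bound $\|v\|_{C^{0,\alpha}(K)}\leq N$ for some $\alpha\in(0,1)$ and $N$ depending only on $K$, $K'$, the ellipticity structure of $\sigma\sigma^T$ (which is fixed, given by $(A_1)$), the $C^\infty$ norms of $b$ on $K'$, and the stated $L^\infty$ bounds — but not on $\delta$. This is precisely the type of estimate proved by Bony for hypoelliptic operators satisfying the Hörmander condition (alternatively one may first obtain a uniform $W^{2,p}_{loc}$ bound and then use the subelliptic Sobolev embedding, or use the Hölder estimate that follows from the subelliptic Harnack inequality). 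Applying this with $c\equiv\delta$, $g=\delta f$, $v=v_\delta$ yields exactly \eqref{holdervdelta}, with $N$ depending only on $K$ and the data of the problem.

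The main obstacle — and the point that must be handled with care — is the uniformity in $\delta$: one must make sure that the constant in the subelliptic a priori estimate does not degenerate as $\delta\to0^+$. This is fine because the zeroth order term $\delta v_\delta$ has the favorable sign and small coefficient, so it can be absorbed into the right-hand side ($|\delta v_\delta|\leq C$ on $K'$ uniformly), reducing everything to a $\delta$-independent estimate for the $\delta$-free operator $-tr(\sigma\sigma^TD^2\cdot)-b\cdot D\cdot$, whose coefficients are fixed and smooth. A secondary technical point is that the estimate of \cite{Bo} is stated for classical solutions; but by Lemma~\ref{exudelta0} the viscosity solution $u_\delta$ is smooth, so no regularization argument beyond citing \cite{Bo} is needed. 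Finally, $\alpha\in(0,1)$ can be taken independent of $K$ since the subelliptic structure is globally the same, and enlarging $K$ only affects $N$, giving the statement as written.
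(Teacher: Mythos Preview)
Your argument is correct in spirit and would prove the lemma, but it differs from the paper's proof in two respects worth noting.

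First, the paper does not cite Bony~\cite{Bo} for the H\"older estimate but rather Krylov~\cite{Kri}, whose Theorem~2.1 gives precisely a quantitative interior H\"older bound for equations satisfying the H\"ormander condition, with constants depending only on the data and the $L^\infty$ norms of the solution and right-hand side. Bony's paper is primarily about maximum principles and Harnack inequalities; while a H\"older estimate can indeed be extracted from the Harnack inequality (as you correctly suggest as an alternative), citing Krylov is cleaner and avoids having to rework that derivation. Your parenthetical alternatives are exactly what closes this gap.

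Second, the paper does not apply the interior estimate directly to $v_\delta$ on $\re^3$. Instead it rewrites the equation as $-L_0 v_\delta + v_\delta = \delta f + (1-\delta)v_\delta$ (with $L_0$ in divergence form, matching Krylov's framework), introduces auxiliary Dirichlet problems on balls $B(0,n)$ with the same right-hand side and zero boundary data, applies Krylov's estimate to the approximants $v_{\delta,n}$ uniformly in $\delta$ and $n$, and then passes to the limit $n\to\infty$ using Ascoli--Arzel\`a, stability, and uniqueness to identify the limit with $v_\delta$. Your direct route---absorb $\delta v_\delta$ into the right-hand side and invoke an interior estimate on $K\Subset K'$---is shorter and perfectly valid once one has an interior H\"older estimate stated for solutions on open sets (which Krylov's result provides). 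The paper's detour through bounded Dirichlet problems buys nothing essential beyond matching the precise hypotheses under which Krylov's theorem is stated; your approach is the more economical one.
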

\begin{proof}
The statement is a direct consequence of the result of Krylov \cite{Kri}.
For the sake of completeness let us sketch how to apply Krylov's result to our case.
From Lemma \ref{exudelta0} the function $v_{\delta}$ is uniformly bounded and smooth and solves the following equation
\begin{equation}\label{probcellapprossv}
\delta v_{\delta}-tr(\sigma(x)\sigma^T(x)D^2 v_{\delta})-b(x)Dv_{\delta}=\delta f(x)\qquad\textrm{in }\re^3.
\end{equation}
We observe that equation (\ref{probcellapprossv}) can be written in the form 
\begin{equation}\label{probcellapprossvKry}
-L_0v_{\delta}+v_{\delta}:=-\sigma^{ik}\partial_{x_i}(\sigma^{jk}\partial_{x_j}v_{\delta})-BDv_{\delta}+v_{\delta}=
\delta f+(1-\delta)v_{\delta}
\end{equation}
where  $B_j= b_j-\sum_{jk}\sigma^{ik}\partial_{x_i}\sigma^{jk}$ and 
$L_0=\sigma^{ik}\partial_{x_i}(\sigma^{jk}\partial_{x_j}\cdot)$.\\
For $\delta$ fixed, consider the problem 
\begin{equation*}
\left\{\begin{array}{ll}
-L_0v_{\delta, n}+v_{\delta, n}= \delta f+(1-\delta)v_{\delta}&\quad\textrm{in }B(0,n)\\
v_{\delta, n}=0&\quad \textrm{on }\partial B(0,n).
\end{array}\right.
\end{equation*}
We observe that $\{v_{\delta, n}\}$ is a equibounded family (by the same arguments of Lemma 
\ref{exudelta0}). 
\cite[Theorem 2.1]{Kri}
of Krylov ensures that there exists $\alpha\in(0,1)$ such that for every compact $K\subset \re^3$ there exists a constant $N_1$ 
(independent of $\delta$, $n$) such that 
\begin{equation}\label{vdeltan}
|v_{\delta, n}(x_1)-v_{\delta, n}(x_2)|\leq N_1|x_1-x_2|^{\alpha}, \quad \forall x_1,x_2\in K.
\end{equation}
By Ascoli-Arzel\`a Theorem, letting $n\rightarrow +\infty$ (possibly passing to a subsequence) we get that $v_{\delta, n}$ converges locally uniformly to a function $V_{\delta}$. By the stability and uniqueness results we 
infer $V_{\delta}=v_{\delta}$. Moreover, passing to the limit in $n$ in (\ref{vdeltan}), we get
(\ref{holdervdelta}).
\end{proof}
In the next result we prove that $\delta u_{\delta}$ converges to a constant which will be characterize in terms of the invariant measure of the process~\eqref{process}.
%$\nu(\overline x, \overline p)$ where $\nu$ is defined in (\ref{nu}) for any fixed $\overline x$, $\overline p$.
\begin{theorem}\label{conv1}
Under assumptions ($A_1$)-($A_3$), 
%for any fixed $\overline x$, $\overline u$, $\overline p$, 
the solution $u_{\delta}$ of  problem (\ref{probcellappross}) given in Lemma~\ref{exudelta0} satisfies
\begin{eqnarray}\label{4.10}
&&\lim_{\delta\rightarrow 0}\delta u_{\delta}=
\int_{\re^3}f(x)dm(x),\ \textrm{locally uniformly},
\end{eqnarray}
where $m$ is the invariant measure of process (\ref{process}) founded in Section \ref{IM}.
%where $m$ is the invariant measure of the diffusion process in $\re^3$ with infinitesimal generator the operator~$\mathcal L$ introduced in (\ref{op_l}).
\end{theorem}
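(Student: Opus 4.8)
The plan is to show that $\delta u_\delta$ converges, along subsequences, to a constant, and then to identify that constant as $\int_{\re^3} f\,dm$ using the invariant measure. First I would combine Lemma~\ref{exudelta0} and Lemma~\ref{lemmaholdervdelta}: the family $v_\delta:=\delta u_\delta$ is uniformly bounded (by $\|f\|_\infty$) and locally uniformly H\"older, so by Ascoli--Arzel\`a there is a subsequence $\delta_k\to 0$ with $v_{\delta_k}\to v$ locally uniformly, where $v\in C(\re^3)$ is bounded. By the stability of viscosity solutions, passing to the limit in~\eqref{probcellapprossv} (note $\delta f\to 0$ and $\delta v_\delta\to 0$ uniformly), $v$ is a bounded viscosity solution of $\mathcal Lv=0$ in $\re^3$, i.e. $\mathcal LV=0$ with $V=v$ in the notation of Proposition~\ref{Liouville}. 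Since $v$ is bounded and the Lyapunov function $w$ of Lemma~\ref{Lya} satisfies $w\to+\infty$ (so $\limsup V/w=0$ and $\liminf V/w=0$), Proposition~\ref{Liouville} applies to both the sub- and the supersolution case and forces $v\equiv c$ for some constant $c=c(\delta_k)$ a priori depending on the subsequence.

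Next I would identify $c$. The natural route is to test the equation against the invariant measure $m$ from Theorem~\ref{m}. Formally, integrating~\eqref{probcellapprossv} against $m$ and using $\mathcal L^*m=0$ gives $\delta\int u_\delta\,dm = \int \delta f\,dm$, i.e. $\int v_\delta\,dm=\int f\,dm$ for every $\delta$; letting $\delta_k\to0$ and using $v_{\delta_k}\to c$ together with $\int dm=1$ yields $c=\int_{\re^3} f\,dm$. Since the limit $c$ is the same for every convergent subsequence, the whole family $\delta u_\delta$ converges locally uniformly to $\int f\,dm$, which is~\eqref{4.10}.

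The main obstacle is making the pairing $\int v_\delta\,\mathcal L^*m$-type integration by parts rigorous, since $m$ is only an $L^1$ density (with $W^{2,p}_{\rm loc}$ regularity from the elliptic regularity used in Proposition~\ref{thmdelta}), $u_\delta$ is smooth but only known to be bounded (not decaying), and the domain is unbounded so boundary terms at infinity must be controlled. The clean way is to first prove the identity at the level of the nondegenerate approximations: for the problem $\delta u_{\delta,\rho} - \mathcal L_\rho u_{\delta,\rho}=f$ on $O_n$ with the Neumann condition dual to~\eqref{reflec}, multiply by $m_\rho^n$, integrate over $O_n$, and use that the boundary terms cancel by the reflecting boundary conditions, giving $\int_{O_n} v_{\delta,\rho}\,m_\rho^n = \delta\int_{O_n} f\,m_\rho^n$; then pass to the limit $n\to\infty$, then $\rho\to0$, using the weak convergences $m_\rho^n\rightharpoonup m_\rho\rightharpoonup m$ established in Section~\ref{IM}, the tightness estimates~\eqref{stimamndelta}--\eqref{stimamdelta} (which prevent mass escaping to infinity), and the local uniform convergence of the $v_{\delta,\rho}$'s (obtained exactly as in Lemma~\ref{lemmaholdervdelta}, whose Krylov estimate is uniform in $\rho$). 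An alternative, avoiding the double approximation, is to use a cutoff: multiply~\eqref{probcellapprossv} by $\xi_R m$ with $\xi_R$ a smooth cutoff supported in $B_{2R}$, integrate by parts moving derivatives onto $m$, and show the commutator terms involving $\nabla\xi_R$ vanish as $R\to\infty$ because $\int \phi\, m<\infty$ forces $m$ to decay; this requires a mild bound on how fast $\sigma\sigma^T$ and $b$ grow relative to $\phi$, which holds for the explicit $w,\phi$ chosen in Lemma~\ref{Lya}. Either way, the only genuinely delicate point is this justification of the duality identity on the whole space; the compactness and Liouville steps are routine given the results already proved.
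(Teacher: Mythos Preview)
Your compactness and Liouville steps match the paper exactly. The genuine difference is in how you identify the constant~$c$. You go the PDE route---pairing the equation with~$m$ and justifying $\int(\mathcal L v_\delta)m=0$ either by the double approximation $(n,\rho)$ or by cutoffs---and you correctly flag this as the only delicate point. The paper bypasses this entirely by using the stochastic representation
\[
u_\delta(x)=\mathbb E_x\int_0^{+\infty} f(X_t)e^{-\delta t}\,dt
\]
and the \emph{dynamical} definition~\eqref{4.12} of the invariant measure: integrating in~$x$ against~$m$, swapping the order, and using $\int_{\re^3}\mathbb E_x[f(X_t)]\,dm(x)=\int_{\re^3}f\,dm$ for every~$t$ gives immediately $\delta\int u_\delta\,dm=\int f\,dm$, i.e.\ $\int v_\delta\,dm=\int f\,dm$; then dominated convergence (since $|v_\delta|\le C$ and $m$ is a probability) yields $c=\int f\,dm$. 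Your approach is correct and purely analytic, which fits the spirit of the paper, but it is noticeably longer; the paper's probabilistic shortcut is cleaner precisely because the invariant measure was introduced via~\eqref{4.12} and not only as a distributional solution of $\mathcal L^*m=0$, so no integration by parts on~$\re^3$ is needed at all.
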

\begin{proof}
We shall proceed following some arguments of~\cite{BCM}. 
The functions
$v_{\delta}:=\delta u_{\delta}$ solve (\ref{probcellapprossv})
and,
from estimate (\ref{sublin}),
satisfy 
\begin{equation}\label{equilin}
|v_{\delta}|\leq C,\qquad\textrm{in }\re^3,
\end{equation}
with $C$ independent of $\delta$, hence they are
uniformly bounded in $\re^3$.
From Lemma \ref{lemmaholdervdelta}
$v_{\delta}$
are also uniformly H\"older continuous in any compact set of $\re^3$.
Then by the Ascoli-Arzel\`a theorem there is a sequence $\delta_n\rightarrow 0$ and a continuous function $w$ such that 
$v_{\delta_{n}}\rightarrow v$ locally uniformly; by stability, $v$ is a solution of
\begin{equation}\label{probcellav}
-tr(\sigma(x)\sigma^T(x)D^2 v)-b(x)Dv=0,\ x\in\re^3,
\end{equation}
hence $v\in C^{\infty}$ by the hypoellipticity of the operator (see \cite{Bo}).
Then by Proposition \ref{Liouville}, $v$ is constant.

In conclusion, we have that, possibly passing to a subsequence, $\{\delta u_\delta\}_\delta$ converges locally uniformly to a constant. Now, it remains to prove that this constant is independent of the subsequence chosen and that is has the form~\eqref{4.10}. By standard arguments of optimal control theory (see~\cite{FS}), the function $u_\delta$ can be written as
\begin{equation*}
u_\delta (x)=\mathbb E_x\int_0^{+\infty}f(X_t)e^{-\delta t}\, dt
\end{equation*}
where $X_t$ is the process in~\eqref{process} with initial data $X_0=x$ while $\mathbb E$ denotes the expectation.
Integrating both sides with respect to the invariant measure, we infer
\begin{eqnarray*}
\int_{\re^3}u_\delta(x)\, dm(x) &=&
\int_0^{+\infty}\left(\mathbb E_x\int_{\re^3}f(X_t)\, dm(x)\right)e^{-\delta t}\, dt\\
&=&\int_0^{+\infty}\left(\int_{\re^3}f(x)\, dm(x)\right)e^{-\delta t}\, dt\\
&=&\frac1\delta \int_{\re^3}f(x)\, dm(x)
\end{eqnarray*}
where the second inequality is due to the definition of invariant measure. Taking into account that every convergent subsequence of~$\{\delta u_\delta\}_\delta$ must converge to a constant, we conclude that all the sequence~$\{\delta u_\delta\}_\delta$ converges to $\int_{\re^3}f\,dm$. 
\end{proof}

\subsection{Large time behavior of solutions}\label{large time}
This section concerns the asymptotic behavior for large times of the solution of the parabolic Cauchy problem:
\begin{equation}\label{u}
\left\{\begin{array}{ll}
u_t+{\mathcal L}u=0&\quad \textrm{in }(0,+\infty)\times\re^3,\\
u(0,x)=f(x)&\quad \textrm{on }\re^3,
\end{array}\right.
\end{equation}
where $\mathcal L$ is the operator defined in (\ref{op_l}).
Let us recall that, 
for periodic fully nonlinear equations, 
this issue was studied in \cite[Theorem 4.2]{AB1}. We quote here also the results in 
the manuscript~\cite{LM}.
%Our arguments are 
%similar to those introduced in the manuscript~\cite{LM}.
\begin{theorem}\label{longtime}
 Under the assumptions of Theorem 2.1 and Proposition 3.1, 
for $f(x)\in C^0(\re^3)\cap L^{\infty}(\re^3)$, 
the solution $u(t, x)$ of problem (\ref{u}) verifies
$$
\lim_{t\to +\infty}u(t, x)= \int_{\re^3} f(x)\,dm(x), \qquad \textrm{locally uniformly in } x, $$
where $m$ is the invariant measure of process (\ref{process}) founded in Section \ref{IM}.
\end{theorem}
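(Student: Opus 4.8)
The plan is to reduce the large-time convergence to the ergodic result of Theorem~\ref{conv1} via a standard Abelian--Tauberian-type comparison between the discounted solutions $u_\delta$ and the parabolic solution $u$. First I would record the stochastic representation of the solution of~\eqref{u}: by classical verification arguments for hypoelliptic parabolic equations (see~\cite{FS}), since $f\in C^0(\re^3)\cap L^\infty(\re^3)$ and the coefficients $\sigma$, $b$ are smooth, the problem~\eqref{u} has a unique bounded solution given by $u(t,x)=\mathbb E_x\big(f(X_t)\big)$, where $X_t$ is the process~\eqref{process} with $X_0=x$; in particular $\|u(t,\cdot)\|_\infty\le\|f\|_\infty$ for all $t$. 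From the definition of the invariant measure~$m$ in~\eqref{4.12} this already yields
\begin{equation*}
\int_{\re^3}u(t,x)\,dm(x)=\int_{\re^3}f(x)\,dm(x)\qquad\textrm{for all }t>0,
\end{equation*}
which identifies the only possible limit constant as $\int_{\re^3}f\,dm$.

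Next I would establish the compactness needed to extract a limit. Using the stochastic representation (or, equivalently, the comparison principle of~\cite{Bo} with the constant sub/supersolutions $\pm\|f\|_\infty$), the family $\{u(t,\cdot)\}_{t\ge 1}$ is uniformly bounded on $\re^3$. By the interior parabolic regularity for subelliptic operators satisfying H\"ormander's condition (the same Krylov-type estimate used in Lemma~\ref{lemmaholdervdelta}, applied now on parabolic cylinders), the functions $(s,x)\mapsto u(t+s,x)$ are locally uniformly H\"older continuous, with constants independent of $t$. Hence along any sequence $t_n\to+\infty$ there is a subsequence such that $u(t_n+\cdot,\cdot)\to U(s,x)$ locally uniformly, where $U$ solves $U_s+\mathcal L U=0$ in $\re\times\re^3$ and is globally bounded. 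I would then freeze time: the function $x\mapsto U(s,x)$ is, for each fixed $s$, a bounded smooth solution of the stationary problem $\mathcal L V=U_s(\cdot)$; a cleaner route is to note that boundedness of $U$ together with the parabolic regularity forces $U_s$ to vanish (for instance by applying the argument below to both $U$ and its time-translates, or by differentiating in $s$), so $U$ is independent of $s$ and solves~\eqref{probcellav}. By hypoellipticity $U\in C^\infty$, and since $U$ is bounded, Proposition~\ref{Liouville} (with the Lyapunov function $w$ of Lemma~\ref{Lya}, which satisfies~\eqref{lyap}, and $\limsup_{|x|\to\infty}U/w=\liminf_{|x|\to\infty}U/w=0$ because $U$ is bounded and $w\to+\infty$) shows that $U$ is a constant.

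Finally I would identify the constant and remove the subsequence. Integrating $u(t_n+\cdot,\cdot)\to U$ against~$m$ and using the invariance identity above together with $\int_{\re^3}dm=1$ gives $U=\int_{\re^3}f\,dm$; to justify passing the limit inside the integral one uses the uniform bound $\|u(t_n+\cdot,\cdot)\|_\infty\le\|f\|_\infty$, local uniform convergence, and the tightness of $m$ (it is a probability measure). Since every subsequential limit equals the same constant $\int_{\re^3}f\,dm$, the full limit $\lim_{t\to+\infty}u(t,x)=\int_{\re^3}f\,dm(x)$ holds, and the convergence is locally uniform in~$x$ by the equicontinuity already used. The main obstacle is the time-freezing step: showing that a bounded entire (in time) solution of $U_s+\mathcal L U=0$ must be time-independent. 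I expect to handle this either by applying Proposition~\ref{Liouville} in a parabolic form, or, more elementarily, by a monotonicity/ordering argument — comparing $u(t,\cdot)$ with $u(t+h,\cdot)$ via the comparison principle to show $\mathrm{osc}_{\re^3}u(t,\cdot)$ and the spread of subsequential limits shrink, so that only a constant survives in the limit. The remaining ingredients (stochastic representation, uniform H\"older bounds, tightness of $m$) are routine given the tools already developed in Sections~\ref{IM}--\ref{sectergodic}.
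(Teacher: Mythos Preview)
Your outline is essentially the paper's: bound $u$ by comparison with $\pm\|f\|_\infty$, obtain uniform local regularity to extract subsequential limits by Ascoli--Arzel\`a, apply Proposition~\ref{Liouville} to see each limit is constant, and identify the constant via the invariance identity~\eqref{4.12} and dominated convergence. The announced ``Abelian--Tauberian comparison'' with the discounted $u_\delta$ is never actually used; what you execute is the direct parabolic argument, exactly as in the paper.

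The step you flag as the main obstacle --- passing from a subsequential limit to a solution of the \emph{stationary} equation $\mathcal L v=0$ so that Proposition~\ref{Liouville} applies --- is precisely the step the paper does not spell out either, but it handles it differently. The paper does not go through a two-variable entire solution $U(s,x)$ followed by a separate ``time-freezing''; instead it first records the Lipschitz-in-time bound $|u(t+s,x)-u(t,x)|\le cs$ (hence $|u_t|\le c$) from~\cite[Theorem~4.2]{AB1}, extracts only the spatial limit $v=\lim u(t_{n_k},\cdot)$, and then cites the same reference as ``standard arguments'' for the conclusion $\mathcal L v=0$. So your acknowledged obstacle and the paper's citation are the same step in different packaging. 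Neither of your proposed fixes (a parabolic-Liouville statement, or a monotonicity/ordering argument on $u(t,\cdot)$ versus $u(t+h,\cdot)$) is carried through, and in the present unbounded, non-periodic setting they are not automatic; to complete the proof along the paper's lines you should import the stabilization argument of~\cite{AB1} rather than leave this as an ``expectation''.
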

\begin{proof}
Since $\pm \|f\|_{\infty}$ are 
sub and supersolution of (\ref{u}), by the comparison principle 
we have that
\begin{equation}\label{ubound}
\|u\|_{\infty}\leq\|f\|_{\infty}.
\end{equation}
Arguing as in 
\cite[Theorem 4.2]{AB1} we get, for some $c>0$,
$
|u(t+s,x)-u(t,x)|\leq cs,$ and in particular $|u_t(t,x)|\leq c$.
Moreover classical results on regularity of subelliptic operators give that $u(t, \cdot)$ are locally H\"older continuous on $x$ uniformly in $t$ (see \cite{Bo,Kri}).
Hence by Ascoli-Arzel\`a theorem for any sequence $t_n\rightarrow +\infty$ there exists a subsequence $t_{n_k}$
such that  $u(t_{n_k}, \cdot)\rightarrow v$ locally uniformly for some $v\in C^0(\re^3)$.
By standard arguments (see \cite[Theorem 4.2]{AB1}),
$v$ is the solution
of ${\mathcal L}v=0$; hence, by Proposition 3.1 (the Liouville type result), it is a constant. Therefore, we have 
\begin{equation}\label{convC}
u(t_{n_k}, \cdot )\rightarrow C,\ \textrm{ locally uniformly }.
\end{equation}
We show now that the constant $C$ is independent of the chosen sequence.
Let us consider an arbitrary sequence $s_{n}$ such that $s_{n}\rightarrow +\infty$ and $u(s_{n}, \cdot )\rightarrow K$ locally uniformly.
From (\ref{4.12}) 
$$\int_{\re^3} u(s_{n}, x)\, dm(x) = \int_{\re^3} f(x)\, dm(x)$$
Using \eqref{ubound},  $\int_{\re^3}  dm(x) =1$ and the dominated convergence theorem 
$$K= \int_{\re^3} f(x)\, dm(x).$$
\end{proof}
\begin{remark}
{\rm Let us consider the following Cauchy problems
\begin{equation}\label{w}
\left\{\begin{array}{ll}
v_t+{\mathcal L}v=f&\quad \textrm{in }(0,+\infty)\times \re^3\\
v(0,x)=0&\quad \textrm{on }\re^3,
\end{array}\right.
\end{equation}
where $\mathcal L$ is the operator defined in (\ref{op_l}) and $f$ is a function as in Theorem~\ref{longtime}.

By means of the Duhamel formula and a change of variables
the solution 
$v$ can be written as
$v(t,x)= \int^t_0u(\tau,x)d\tau$ where
$u$ is the solution of (\ref{u}).
Hence
the statement of Theorem~\ref{longtime} can be rephrased as
$$\lim_{t\to +\infty}\frac{v(t,x)}{t}= \int_{\re^3} f(x)\,dm(x).$$
}
\end{remark}

\section{The general case}\label{sect:gen_case}

In this section we address to the process (\ref{process}) under the following assumptions:
\begin{equation}\label{sigmagen}
\left\{\begin{array}{l}
\ \sigma(x)\in C^{\infty}(\re^N), \\
\|\sigma(x)\|\leq C(|x|+1),\qquad \textrm{for some } C>0;\\
\textrm{the columns of $\sigma$  satisfy H\"ormander condition}.
\end{array}\right.
\end{equation}
\begin{eqnarray}\label{bfgen}
b(x), f(x)\in C^{\infty}(\re^N),\ \|b(x)\|,\ |f(x)|\leq C(|x|+1),\ C>0.
\end{eqnarray}
\begin{equation}\label{arogen}
\left\{\begin{array}{l}
\textrm{For $A:=\sigma\sigma^T$, there exists $\{A_{\rho}(x)\}_{\rho\in(0,1)}$ with  $A_{\rho}=\sigma_{\rho}\sigma^T_{\rho}$},\\ \sigma_{\rho}\in C^{\infty}(\re^N),\
A_{\rho}\rightarrow A \textrm{ in } L^{\infty} \textrm{ and } A_{\rho} \textrm{ is locally definite positive}.
\end{array}\right.
\end{equation}
\begin{equation}\label{liapgen}
\mbox{There exists a function $w$ which verifies \eqref{C2} for any $\rho$ sufficiently small.}
\end{equation}
The grow assumptions on $\sigma$ in (\ref{sigmagen}) and on $b$ and $f$ in (\ref{bfgen}) allow us to obtain the existence of a process $X_t$ in (\ref{process}).\\
 Under assumptions (\ref{sigmagen}),  (\ref{bfgen}), the Liouville type result contained in Proposition \ref{Liouville} 
 still holds true. In fact 
 the results of 
Bony \cite{Bo} on comparison principle and strong maximum principle hold also in this setting if we
observe that
$$ -tr(\sigma\sigma^TD^2u)=\sum_jX_j^2u- C(x)\cdot Du,$$
where $C(x)=D\sigma^j\cdot \sigma^j$ and $\sigma^j$ are the columns of the matrix $\sigma$. 
  \begin{theorem}\label{mgen}
Under assumptions (\ref{sigmagen})-(\ref{liapgen}) there exists an invariant probability measure $m$ associated to the diffusion process (\ref{process}).
\end{theorem}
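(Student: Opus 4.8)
The plan is to reproduce, in the present generality, the two-step approximation already carried out for the Heisenberg group in Proposition~\ref{thmdelta} and Theorem~\ref{m}, checking along the way that each argument uses only the local uniform ellipticity of $A_\rho$, the smoothness and linear growth of $\sigma,\sigma_\rho,b$, and the $\rho$-uniform Lyapunov bound~\eqref{liapgen}. First I would fix $\rho\in(0,1)$: by~\eqref{arogen} the operator $\mathcal L_\rho u=-tr(A_\rho D^2 u)-b\cdot Du$ is uniformly elliptic on every bounded subset of $\re^N$, and since $A_\rho=\sigma_\rho\sigma_\rho^T$ with $\sigma_\rho\in C^{\infty}$ it is the infinitesimal generator of $dX^\rho_t=b(X^\rho_t)\,dt+\sqrt2\,\sigma_\rho(X^\rho_t)\,dW_t$ (the growth hypotheses in~\eqref{sigmagen} and~\eqref{bfgen} guarantee that this process, as well as~\eqref{process}, is globally well posed). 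By the equivalence of~\eqref{C2} and~\eqref{C2barra}, which is Lemma~\ref{LemmaL} of the Appendix and holds for any operator of the form considered here, assumption~\eqref{liapgen} provides a single $\overline w\in C^{\infty}(\re^N)$ with $\overline w\to+\infty$ as $|x|\to\infty$, together with $\chi\in C^{\infty}_0$ and $\phi\in C^{\infty}$, $\phi\to+\infty$, such that $\mathcal L_\rho\overline w+\chi\overline w=\phi$ in $\re^N$ for every sufficiently small $\rho$; I would then set $O_n:=\{\overline w<M_n\}$ with $M_n\uparrow+\infty$ a sequence of non-critical values of $\overline w$, so that the $O_n$ are bounded, smooth and $O_n\nearrow\re^N$.

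On each $O_n$, Bensoussan's results \cite[Sect.~4]{B1} apply because $\mathcal L_\rho$ is uniformly elliptic there, yielding a unique invariant measure $m^n_\rho>0$, $\int_{O_n}m^n_\rho=1$, for the diffusion reflected at $\partial O_n$, which solves the adjoint problem~\eqref{mdeltan}--\eqref{reflec}. Multiplying~\eqref{mdeltan} by $\overline w$, integrating over $O_n$ and using the reflecting boundary condition together with $\overline w=M_n$ on $\partial O_n$ (so that $\partial\overline w/\partial\nu\geq0$ on $\partial O_n$) and $\sum_{ij}(a_\rho)_{ij}\nu_i\nu_j\geq0$, I obtain $\int_{O_n}m^n_\rho\,\mathcal L_\rho\overline w\leq0$, hence $\int_{O_n}\phi\,m^n_\rho\leq\int_{\mathrm{supp}\,\chi}\chi\,\overline w\,m^n_\rho\leq C$ with $C$ independent of $n$; extending $m^n_\rho$ by zero outside $O_n$ this reads $\int_{\re^N}\phi\,m^n_\rho\leq C$. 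Since $\phi\to+\infty$ this is a uniform tightness estimate, namely $\int_{B_{R_N}^C}m^n_\rho\leq C/N$, so by Prokhorov's theorem $m^n_\rho\rightharpoonup m_\rho$ along a subsequence, with $\int_{\re^N}m_\rho=1$; the interior $W^{2,p}$ estimates for uniformly elliptic operators with smooth coefficients then allow passage to the limit in~\eqref{mdeltan}, giving $\mathcal L^*_\rho m_\rho=0$ in $\re^N$, $m_\rho\geq0$, $\int_{\re^N}m_\rho=1$.

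Finally I would let $\rho\to0$. Repeating the same test-function computation, now on all of $\re^N$, gives $0=\int_{\re^N}\mathcal L^*_\rho m_\rho\,\overline w=\int_{\re^N}(\phi-\chi\overline w)\,m_\rho$, whence $\int_{\re^N}\phi\,m_\rho\leq C$ with $C$ independent of $\rho$; here it is \emph{essential} that $\overline w$, $\chi$, $\phi$ can be chosen independently of $\rho$, which is exactly what the $\rho$-uniformity in~\eqref{liapgen} provides. Tightness again yields $m_\rho\rightharpoonup m$ along a subsequence, with $\int_{\re^N}m=1$ and $m\geq0$; and for every $\psi\in C^{\infty}_0(\re^N)$ one has $0=\int_{\re^N}\mathcal L^*_\rho m_\rho\,\psi=\int_{\re^N}\mathcal L_\rho\psi\,m_\rho\to\int_{\re^N}\mathcal L\psi\,m=\int_{\re^N}\mathcal L^*m\,\psi$, using that $A_\rho\to A$ in $L^{\infty}$ forces $\mathcal L_\rho\psi\to\mathcal L\psi$ uniformly on $\mathrm{supp}\,\psi$ while $m_\rho\rightharpoonup m$ weakly in $L^1$. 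Hence $\mathcal L^*m=0$, and $m$ is the desired invariant probability measure.

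The main obstacle is not a single hard estimate but the bookkeeping: one must re-examine the proof of Proposition~\ref{thmdelta} step by step to confirm that the applicability of~\cite{B1} on $O_n$, the boundary-term manipulation, the $W^{2,p}$ regularity, the weak-$L^1$ compactness and the stability as $\rho\to0$ all rest only on the local uniform ellipticity of $A_\rho$, the smoothness and linear growth of $\sigma,\sigma_\rho,b$, and the $\rho$-independent Lyapunov function — all granted by~\eqref{sigmagen}--\eqref{liapgen}. The one genuinely new point to check is that the Appendix equivalence~\eqref{C2}$\Leftrightarrow$\eqref{C2barra} is stated, or extends, to a general elliptic operator of this form, so that the reformulation in terms of $\overline w$ is legitimate and uniform in $\rho$; once that is secured, the Heisenberg proof transfers verbatim (and uniqueness, not claimed here, would follow as in Theorem~\ref{m} from~\cite{AK} or~\cite{IK}).
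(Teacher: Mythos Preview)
Your proposal is correct and follows exactly the approach of the paper: the paper's own proof is a two-line invocation of Proposition~\ref{thmdelta} and Theorem~\ref{m}, and you have simply spelled out how those arguments transfer under assumptions~\eqref{sigmagen}--\eqref{liapgen}. You have also correctly isolated the one technical point the paper leaves implicit, namely that the reformulation via Lemma~\ref{LemmaL} must yield $\overline w,\chi,\phi$ usable uniformly in $\rho$.
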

\begin{proof}
We observe that, by assumptions (\ref{arogen}), (\ref{liapgen}), 
there exists an unique invariant measure $m_{\rho}$ for the process with diffusion $\sigma_{\rho}$. Then,
arguing as in the proof of Theorem \ref{m}, using again the function $w$ in (\ref{liapgen}) we obtain the existence of the invariant measure
associated to
the process (\ref{process}).
\end{proof}

\begin{corollary}
Under assumptions (\ref{sigmagen})-(\ref{liapgen}) Theorems \ref{conv1} and \ref{longtime} hold true.
\end{corollary}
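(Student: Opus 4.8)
The plan is to verify that every tool invoked in the proofs of Theorems~\ref{conv1} and~\ref{longtime} remains available once $(A_1)$--$(A_2)$ are replaced by the structural hypotheses~\eqref{sigmagen}--\eqref{liapgen}, and then to rerun those proofs unchanged. First I would record the two facts on which everything rests: the invariant measure~$m$ exists by Theorem~\ref{mgen} and is unique by the results of~\cite{AK,IK} exactly as in Theorem~\ref{m}; and the Liouville property of Proposition~\ref{Liouville} applies, because~\eqref{liapgen} supplies a function~$w$ satisfying~\eqref{C2} --- hence in particular~\eqref{lyap} --- while the identity $-\mathrm{tr}(\sigma\sigma^T D^2u)=\sum_j X_j^2u-C(x)\cdot Du$ with $C(x)=D\sigma^j\cdot\sigma^j$, already noted before Theorem~\ref{mgen}, shows that Bony's comparison principle, hypoellipticity and strong maximum principle~\cite{Bo} hold for~$\mathcal L$ under~\eqref{sigmagen}--\eqref{bfgen}.

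For Theorem~\ref{conv1} I would then repeat the three-step scheme of Section~\ref{sectergodic}: (a) with $f$ bounded, the constants $\pm C/\delta$ are still a super- and a subsolution of the discounted equation, so Perron's method plus the comparison principle give the unique smooth~$u_\delta$ with $|u_\delta|\le C/\delta$ (Lemma~\ref{exudelta0}); (b) writing the equation in the divergence form $-L_0 v_\delta+v_\delta=\delta f+(1-\delta)v_\delta$ with $B_j=b_j-\sum\sigma^{ik}\partial_{x_i}\sigma^{jk}$ --- coefficients that are smooth and, by~\eqref{sigmagen}, locally bounded --- Krylov's estimate~\cite{Kri} yields the local H\"older bound for $v_\delta=\delta u_\delta$ uniform in~$\delta$ (Lemma~\ref{lemmaholdervdelta}); (c) Ascoli--Arzel\`a, stability and hypoellipticity give a subsequential locally uniform limit~$v$ with $\mathcal Lv=0$, constant by Proposition~\ref{Liouville}. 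To identify the limiting constant I would use that the process~$X_t$ in~\eqref{process} is well defined thanks to the growth bounds in~\eqref{sigmagen}--\eqref{bfgen}, so that $u_\delta(x)=\mathbb E_x\int_0^{+\infty}f(X_t)e^{-\delta t}\,dt$; integrating against~$m$ and using its invariance yields $\int_{\re^N}f\,dm$, as before.

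For Theorem~\ref{longtime} I would similarly copy the proof of Section~\ref{large time}: $\pm\|f\|_\infty$ are sub- and supersolutions of the Cauchy problem, so $\|u\|_\infty\le\|f\|_\infty$ by comparison~\cite{Bo}; the bound $|u(t+s,x)-u(t,x)|\le cs$ and the local-in-$x$ H\"older bound uniform in~$t$ follow as in~\cite[Theorem~4.2]{AB1} together with the subelliptic regularity of~\cite{Bo,Kri}, which use only the H\"ormander structure and smooth coefficients; Ascoli--Arzel\`a then gives a subsequential limit~$v$ with $\mathcal Lv=0$, constant by Proposition~\ref{Liouville}; finally the invariance identity~\eqref{4.12}, with $\int\,dm=1$ and dominated convergence, pins the constant to $\int_{\re^N}f\,dm$, independently of the chosen sequence.

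The hard part is not any single step but making sure the a priori estimates stay genuinely uniform under the weaker structure: the first-order coefficients~$B$ and~$C$ appearing in the divergence-form and H\"ormander reformulations, although possibly unbounded at infinity, are smooth and hence locally bounded under~\eqref{sigmagen}--\eqref{bfgen}, so Bony's and Krylov's local results apply with constants depending only on the relevant compact set and on the data; and the boundedness of~$f$ (still assumed in both theorems) is what keeps the constant super/subsolutions and the dominated-convergence step valid. Once this bookkeeping is in place, the arguments of Sections~\ref{sectergodic}--\ref{large time} transfer essentially verbatim.
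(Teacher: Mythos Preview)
Your proposal is correct and mirrors the paper's approach. In fact, the paper states this corollary without proof: the surrounding discussion in Section~\ref{sect:gen_case} already records that Bony's results and Proposition~\ref{Liouville} carry over under~\eqref{sigmagen}--\eqref{bfgen} and that Theorem~\ref{mgen} supplies the invariant measure, leaving the corollary as an immediate consequence; your write-up simply makes explicit the verification that the paper treats as understood.
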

\begin{example}The Grushin operator\\
\rm{ 
For $x=(x_1, x_2)\in\re^2$, consider the diffusion matrix 
\begin{equation}\label{sigma}
\sigma(x)=\left(\begin{array}{cc} 1&0\\0&x_1\end{array}\right)
\end{equation}
and observe that
 $\sigma$ satisfies (\ref{sigmagen}) since $X_1=(1,0)$ and $[X_1,X_2]=(0,1)$ span all $\re^2$.
In this case the infinitesimal generator is 
$$\mathcal LV= -V_{x_1x_1}-x_1^2V_{x_2x_2}-b(x)DV.$$

We take $f$ and $b(x)=(b_1(x_1), b_2(x_2))$ satisfying (\ref{bfgen}) with 
\begin{equation}\label{bgrushin}
\left\{  \begin{array}[c]{ll}
- b_1x_1\geq 6,   &\mbox{ if } \vert x_1\vert>1,  \\
- b_2x_2\geq 1,   &\mbox{ if } \vert x_2\vert>1,\\
 b_i \mbox{  are bounded in } [-1,1],& i=1,2.
  \end{array}\right.
\end{equation}
Under these assumptions it is easy to check that the matrix 
$$
\sigma_{\rho}(x)=\left(\begin{array}{ccc} 1&0&0\\0&x_1&\rho\end{array}\right)
$$
satisfies (\ref{arogen}) and that the function $W(x)=\frac{1}{12}x_1^4+\frac{1}{2}x_2^2$ satisfies 
(\ref{liapgen}).
In conclusion, since all the hypotheses (\ref{sigmagen})-(\ref{liapgen}) are satisfied,
Theorem \ref{mgen} 
%and Theorem \ref{convgen} 
apply.}
\end{example}
\begin{remark}\rm {
Lions-Musiela in \cite{LM} have considered a similar degenerate case
but in their paper the elements of the matrix are bounded in $\re^2$ in this way
\begin{equation}\label{sigmaLM}
\sigma(x)=\left(\begin{array}{cc} 1&0\\0&\frac{x_1}{\sqrt{1+x_1^2}}\end{array}\right).
\end{equation}  }
\end{remark}

\appendix
\section{Appendix}\label{sect:appendix}
In the following Lemma we state the equivalence between conditions~\eqref{C2} and~\eqref{C2barra}. This property has already been established by P.L. Lions~\cite{Lnt}; however, for the sake of completeness we shall provide the proof.
\begin{lemma}\label{LemmaL}
Consider a linear operator 
\[{\cal G}(u):=-tr (\tau \tau^T D^2 u)-\beta\cdot Du\]
where $\tau $ is a matrix whose columns verify the H\"ormander condition (\ref{Hor}), $\tau $ and $\beta$ are smooth functions with 
\[|\tau(x)|, |\beta (x)|\leq C(1+|x|)\qquad \forall x\in\re^N.\]
Then, conditions~\eqref{C2} and~\eqref{C2barra} are equivalent; namely the following properties are equivalent:
\begin{itemize}
\item[$(i)$] there exists $w\in C^\infty(\re^N)$ such that
\[{\cal G}(w)\geq 1 \quad \textrm{in }\overline{B(0,R_0)}^C,\quad
w\geq 0 \quad \textrm{in }\overline{B(0,R_0)}^C,\quad \lim_{|x|\to+\infty}w=+\infty\]
for some constant~$R_0>0$;
\item[$(ii)$] there exists $\bar w\in C^\infty(\re^N)$ such that
\[{\cal G}(\bar w)+\chi\bar w=\phi \quad \textrm{in }\re^N,\qquad
 \lim_{|x|\to+\infty}\bar w=+\infty\]
for some $C^\infty$ functions~$\chi$ and~$\phi$ with $\lim_{|x|\to+\infty}\phi=+\infty$, $\chi\geq 0$ and $supp \chi$ compact.
\end{itemize}
\end{lemma}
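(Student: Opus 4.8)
I would treat the two implications separately; $(ii)\Rightarrow(i)$ is the immediate one. If $\bar w,\chi,\phi$ are as in $(ii)$, then $\bar w$ is bounded below (being continuous and tending to $+\infty$ at infinity), say $\bar w\ge -M$, so $w:=\bar w+M\ge 0$ is still smooth, still coercive, and satisfies ${\cal G}(w)={\cal G}(\bar w)$ since ${\cal G}$ annihilates constants. Because $\phi\to+\infty$ and $\mathrm{supp}\,\chi$ is compact, I may fix $R_0>0$ with $\mathrm{supp}\,\chi\subset B(0,R_0)$ and $\phi\ge 1$ on $\overline{B(0,R_0)}^C$; on that set $\chi\equiv 0$, so ${\cal G}(w)=\phi-\chi\bar w=\phi\ge 1$ and $w\ge 0$ there, which is exactly $(i)$.

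For $(i)\Rightarrow(ii)$ I would start by pushing the (inessential, because interior) behaviour of ${\cal G}(w)$ on $B_0:=B(0,R_0)$ into the zeroth order term. Adding a constant, I may assume $w\in C^\infty(\re^N)$, $w\ge 2$, ${\cal G}(w)\ge 1$ on $\overline{B_0}^C$ and $w\to+\infty$; since ${\cal G}(w)$ is continuous it is $\ge -M_0$ on $\overline{B_0}$, so choosing $\chi\in C^\infty_0(\re^N)$ with $\chi\ge 0$, $\chi\equiv M_0+1$ on $\overline{B_0}$ and $\mathrm{supp}\,\chi=\overline{B(0,R_0+1)}$ gives ${\cal G}(w)+\chi w\ge 1$ on all of $\re^N$. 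The only remaining defect is that $\phi:={\cal G}(w)+\chi w$, although smooth and $\ge 1$, may fail to diverge at infinity; and I note that once one has any coercive $\bar w$ with ${\cal G}(\bar w)\to+\infty$, the same $\chi$ together with $\phi:={\cal G}(\bar w)+\chi\bar w$ settles $(ii)$.

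Hence the crux is to manufacture, out of $w$, a coercive function whose $\cal G$-image diverges. The natural candidate is $\bar w:=w^{1+\varepsilon}$ with $\varepsilon>0$ small, for which a direct computation gives
\[
{\cal G}(\bar w)=(1+\varepsilon)\,w^{\varepsilon}\left({\cal G}(w)-\varepsilon\,w^{-1}|\tau^T\nabla w|^2\right).
\]
If $|\tau^T\nabla w|^2\le C\,w\,{\cal G}(w)$ on $\overline{B_0}^C$ — equivalently, ${\cal G}(w^2)$ is bounded below there — then for $\varepsilon<1/(2C)$ one gets ${\cal G}(\bar w)\ge\tfrac12 w^{\varepsilon}{\cal G}(w)\ge\tfrac12 w^{\varepsilon}\to+\infty$, and $(ii)$ follows with $\phi={\cal G}(\bar w)+\chi\bar w$. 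This gradient bound is straightforward for the explicit Lyapunov functions of Lemma~\ref{Lya} (indeed, for $\alpha>0$ one already has ${\cal G}(w)\to+\infty$ directly) and for polynomial-type Lyapunov functions in general. In full generality one would instead solve the linear subelliptic equation ${\cal G}(\bar w)+\chi\bar w=\phi$ by exhaustion: solve the Dirichlet problems on $B(0,n)$ with datum $w$, use $w$ as a subsolution so that $\bar w\ge w\to+\infty$, and pass to the limit via the comparison principle and the interior hypoelliptic regularity of Bony~\cite{Bo}, with $\phi\in C^\infty(\re^N)$ chosen so that $\phi\ge{\cal G}(w)+\chi w$ (whence $w$ is a subsolution) and $\phi\ge\log(2+|x|^2)$ (whence $\phi\to+\infty$).

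The part I expect to be the genuine obstacle is precisely the control of the first order cost: either checking the gradient estimate $|\tau^T\nabla w|^2\le C\,w\,{\cal G}(w)$, which is not automatic for an arbitrary Lyapunov function because of the cross terms produced by the unbounded entries of $\tau$, or — in the PDE route — securing the uniform a priori upper bound on the approximating solutions, i.e. a finite global supersolution of ${\cal G}(\cdot)+\chi(\cdot)=\phi$ dominating $w$ on every sphere $\partial B(0,n)$. This is where the recurrence carried by $w$ must really be exploited and where the growth of $\phi$ has to be balanced against that of ${\cal G}(w)$. The remaining points — smoothness of $\bar w$ by hypoellipticity, and checking that $\chi,\phi$ meet the stated requirements — are routine.
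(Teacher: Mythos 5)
Your $(ii)\Rightarrow(i)$ direction and your first reduction (absorbing the behaviour of ${\cal G}(w)$ on $\overline{B(0,R_0)}$ into a compactly supported $\chi$ so that ${\cal G}(w)+\chi w\geq 1$ on all of $\re^N$) coincide with the paper's construction of $w^\flat$. The genuine gap is in the remaining step, and you have in fact located it correctly yourself: neither of your two routes to a coercive right-hand side closes. The substitution $\bar w=w^{1+\varepsilon}$ needs the estimate $|\tau^T\nabla w|^2\leq C\,w\,{\cal G}(w)$ outside $B(0,R_0)$, which is not part of the hypotheses and can fail (hypothesis $(i)$ only gives ${\cal G}(w)\geq 1$, so along a sequence where ${\cal G}(w)$ stays bounded while $w^{-1}|\tau^T\nabla w|^2$ blows up the bracket becomes negative). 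The exhaustion route is circular: to bound the approximations $W_{n m}$ uniformly you need a finite global supersolution of ${\cal G}(\cdot)+\chi(\cdot)=\phi$ for a \emph{prescribed coercive} $\phi$, which is exactly the object the lemma asks you to produce; and your requirement $\phi\geq{\cal G}(w)+\chi w$ (needed to make $w$ a subsolution) forces $\phi$ to grow at least like ${\cal G}(w)$, which is typically unbounded, so $w^\flat$ does not serve as that supersolution.

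The paper's device, which is the missing idea, is to decouple the two coercivity requirements and to choose $\phi$ \emph{a posteriori}. One takes a partition of unity $\{\phi_i\}$ subordinate to the annuli $B(0,i+1)\setminus B(0,i-1)$ and solves ${\cal G}(W_n)+\chi W_n=\sum_{i=1}^n\phi_i$ by vanishing viscosity and exhaustion; here the comparison argument is unconditional because $\sum_{i=1}^n\phi_i\leq 1\leq {\cal G}(w^\flat)+\chi w^\flat$, so $0\leq W_n\leq w^\flat$ uniformly. The increments $w_i:=W_i-W_{i-1}\geq 0$ solve ${\cal G}(w_i)+\chi w_i=\phi_i$ and satisfy $\sum_i w_i\leq w^\flat<\infty$ pointwise; one then picks weights $\lambda_i\to+\infty$ with $\sum_i\lambda_iw_i(0)<\infty$ (an elementary fact about convergent series of nonnegative terms) and uses the Harnack inequality to show that $w^\sharp:=\sum_i\lambda_iw_i$ is finite and solves ${\cal G}(w^\sharp)+\chi w^\sharp=\sum_i\lambda_i\phi_i$, a right-hand side that \emph{is} coercive by construction. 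The final function is $\bar w=w^\flat+w^\sharp$: its coercivity comes from $w^\flat$ alone (so there is no need for $\bar w\geq w$ and no need to dominate ${\cal G}(w)$), while the coercivity of the datum comes from $w^\sharp$. Without this weighted-series construction (or an equivalent substitute), your argument does not establish $(i)\Rightarrow(ii)$ in the stated generality.
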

\begin{proof}
For completeness, we report the arguments of~\cite{Lnt}.
As one can easily check, property~($ii$) obviously implies property~($i$) (possibly adding a constant).

Now, assuming~($i$), we want to prove~($ii$). We denote $K:=\ds\max_{\overline{B(0,R_0)}}|w|$ and $K_{\cal G}:=\ds\max_{\overline{B(0,R_0)}}|{\cal G}(w)|$.
We fix $\chi\in C^\infty_0(\re^N)$ such that $\chi\geq 0$, $\chi=1$ in $B(0,R_0)$ and $\hbox{supp} \chi\subset B(0,2R_0)$.
We claim that the function $w^\flat(x):=w(x)+K+K_{\cal G}+1$ satisfies
\begin{equation}\label{C2stella}
{\cal G}(w^\flat)+\chi w^\flat=:f^*(x)\geq 1 \quad \textrm{in }\re^N,\qquad  \lim_{|x|\to+\infty} w^\flat=+\infty.
\end{equation}
Indeed, the latter property is an immediate consequence of~($i$).\\
 Moreover, for $|x|\leq R_0$, we have
\[{\cal G}(w^\flat)+\chi w^\flat\geq -K_{\cal G}+\chi(w+K+K_{\cal G}+1)\geq 1\]
while, for $|x|\geq R_0$, we have
\[{\cal G}(w^\flat)+\chi w^\flat\geq {\cal G}(w)\geq 1;\]
hence, our claim~\eqref{C2stella} is proved.\\
Let us now consider a regular partition of unity $\ds\{\phi_i\}_{i\in \nat}$ such that  $\ds\phi_i\geq 0$, $\ds\sum_{i=1}^\infty \phi_i(x)=1,$
$ \ds\hbox{ supp }\phi_i\subset B(0,i+1)\setminus B(0,i-1)$, \\$\ds  \phi_i=1 \hbox { on }B(0,i+\frac{1}{2})\setminus B(0,i-\frac{1}{2})$.\\
We claim that there exists a regular solution to
\begin{equation}\label{cl:W_n}
{\cal G}(W_n)+\chi W_n=\sum_{i=1}^n\phi_i\qquad\textrm{in }\re^N, \qquad
0\leq W_n\leq w^\flat.
\end{equation}
In order to prove this existence, it is expedient to introduce, for $m\geq n+1$ and $\epsilon>0$, the following boundary value problems
\begin{equation}\label{W_nme}
\left\{\begin{array}{ll}
({\cal G}-\epsilon \Delta)(W_{nm}^{\epsilon})+\chi W_{nm}^{\epsilon}=\ds\sum_{i=1}^n\phi_i&\quad\textrm{in }B(0,m)\\
W_{nm}^{\epsilon}=0&\quad\textrm{on }\partial B(0,m).
\end{array}\right.
\end{equation}
By the non-degeneracy of the operator, the comparison principle applies to problems~\eqref{W_nme}. Hence, the Perron's method ensures that there exists a unique solution to~\eqref{W_nme}. By standard arguments in hypoelliptic theory (see \cite{Kri}, \cite{OR}), as $\epsilon \to 0^+$, $W_{nm}^{\epsilon}(x)$ converges to $W_{nm}(x)$ in $B(0,m)$, where $W_{nm}$ is the solution to
\begin{equation}\label{W_nm}
\left\{\begin{array}{ll}
{\cal G}(W_{nm})+\chi W_{nm}=\ds\sum_{i=1}^n\phi_i&\quad\textrm{in }B(0,m)\\
W_{nm}=0&\quad\textrm{on }\partial B(0,m),
\end{array}\right.
\end{equation}
where the boundary condition is attained only in the viscosity sense.
We observe that the H\"ormander's condition guarantees the comparison principle for~\eqref{W_nm}; since $0$ and $w^\flat$ are respectively a sub- and a supersolution,  there holds true $0\leq W_{nm}\leq w^\flat$ in $B(0,m)$. On the other hand, for $m_1>m$, still by comparison principle, we infer $W_{nm_1}^{\epsilon}(x)\geq W_{nm}^{\epsilon}(x)$ for every $x\in B(0,m)$; so, as $\epsilon \to 0^+$, we get $W_{nm_1}(x)\geq W_{nm}(x)$ for every $x\in B(0,m)$, namely, the sequence $\{W_{nm}\}_m$ is nondecreasing and locally bounded. Passing to the limit and using the regularity theory for hypoelliptic operators (see \cite{Bo}), we accomplish the proof of our claim~\eqref{cl:W_n}.

By~\eqref{cl:W_n}, the functions~$w_i(x):=W_i(x)-W_{i-1}(x)$ solve 
\[{\cal G}(w_i)+\chi w_i=\phi_i\qquad\textrm{in }\re^N\]
and verify: $\ds\sum_{i=1}^\infty w_i(x)<\infty$ in~$\re^N$.

Let us recall an elementary result: for any
$\sum_{i=1}^\infty a_i<+\infty$ with $a_i\geq 0$, there exists a sequence~$\{\lambda_i\}_i$ 
such that $\lim_{i\to+\infty}\lambda_i=+\infty$ and $\sum_{i=1}^\infty \lambda_i a_i<+\infty$.
Then in our case
there exists a sequence~$\{\lambda_i\}_i$ such that $\lim_{i\to+\infty}\lambda_i=+\infty$ and  $\ds\sum_{i=1}^\infty \lambda_i w_i(0)=K<+\infty$.\\
Let $n_0\in\N$ be fixed.  Let us denote by 
$\ds w^\sharp_n(x):=\ds\sum_{i=1}^n \lambda_i w_i(x)$. In $\ds B(0,n_0)$ $\ds w^\sharp_n(x)$
satisfies:
\begin{equation}\label{C2stellinan}
{\cal G}(w^\sharp_n)+\chi w^\sharp_n=\ds\sum_{i=1}^{n_0+1} \lambda_i\phi_i\qquad w^\sharp_n\geq 0
\end{equation}
and by Harnack inequality there exists a constant $C_{n_0}$ independent of $n$ such that
\begin{eqnarray*}
\sup_{B(0,\frac{n_0}2)} w^\sharp_n&\leq& C_{n_0} \big(\inf_{B(0,\frac{n_0}2)} w^\sharp_n+\sup_{B(0,\frac{n_0}2)} \sum_{i=1}^{n_0+1} \lambda_i\phi_i\big)\\
&\leq&  C_{n_0} \big(K+\sup_{B(0,\frac{n_0}2)} \sum_{i=1}^{n_0+1} \lambda_i\phi_i\big)=C_{n_0}^*
\end{eqnarray*}
This implies that in any bounded set $w^\sharp$ is well defined, i.e.
%By Harnack inequality (observe that locally only a finite number of $\phi_i$'s do not vanish), we deduce that
$w^\sharp(x):=\ds\sum_{i=1}^\infty \lambda_i w_i(x)<\infty$ for every $x\in \re^N$. Moreover the function~$w^\sharp$ satisfies
\begin{equation}\label{C2stellina}
{\cal G}(w^\sharp)+\chi w^\sharp=\ds\sum_{i=1}^\infty \lambda_i\phi_i=:\phi,\qquad w^\sharp\geq 0
\end{equation}
with $\lim_{x\to\infty}\phi(x)=+\infty$.\\
In conclusion, by~\eqref{C2stella} and~\eqref{C2stellina}, the function~$\bar w:=w^\flat+w^\sharp$ satisfies~($ii$).
\end{proof}

\noindent{\sc Acknowledgments } 
\\
We thank Italo Capuzzo Dolcetta and Olivier Ley for helpful discussions.\\
The first and the second authors are members of GNAMPA-INdAM.\\
The second author has been partially supported also by the "Progetto di Ateneo: Traffic flow on networks: analysis and control" of the University of Padova.\\
The third author has been partially funded by the ANR project ANR-12-BS01-
0008-01.

%\nocite{Fri:64}
%\bibliography{fp}

\begin{thebibliography}{1}
\bibitem{AB1}  {\sc O. Alvarez, M. Bardi,}
\newblock {\em {Ergodicity, stabilization, and singular perturbations for Bellman-Isaacs equations.}}, \newblock  Mem. Amer. Math. Soc.  , \rm{\bf  204}, (2010).


\bibitem{ABM}  {\sc O. Alvarez, M. Bardi, C. Marchi,}
\newblock {\em {Multiscale problems and homogenization for second-order Hamilton Jacobi equations}}, \newblock J. Differential Equations , \rm{\bf 243} (2007), 349--387.

\bibitem{AL}  {\sc M. Arisawa,  P.L. Lions,}
\newblock {\em {On ergodic stochastic control}}, \newblock  Comm. Partial Differential Equations, \rm{\bf  23} (1998), 2187--2217.

\bibitem{AK}  {\sc L. Arnold, W. Kliemann,}
\newblock {\em {On unique ergodicity for degenerate diffusions}}, \newblock Stochastics, \rm{\bf 21 } (1987), 41--61.

%\bibitem{BCD} {\sc M. Bardi, I. Capuzzo Dolcetta,}
%\newblock {\em {Optimal control and viscosity solutions of Hamilton-Jacobi Bellman equations}}, \newblock Systems and Control: Foundations and Applications. Birkh\"auser, Boston (1997).

\bibitem{BCM} {\sc M. Bardi, A. Cesaroni, L. Manca,}
\newblock{\em{Convergence by viscosity methods in multiscale financial models with stochastic volatility}}, \newblock  SIAM J. Financial Math., \rm{\bf1} (2010), 230--265.

%\bibitem{BCS} {\sc M. Bardi, A. Cesaroni, A. Scotti,}
%\newblock{\em{Convergence in multiscale financial models with non-gaussian stochastic volatility}}, \newblock  ESAIM: Control, Optim. and Calc Var.
%
\bibitem{BDL} {\sc M. Bardi, F. Da Lio,}
\newblock{\em{Propagation of maxima and strong maximum principle for viscosity solutions of degenerate elliptic equations. I, Convex operators.}}, \newblock  Nonlinear Anal. \rm{\bf 44} (2001), 991--1006.

%\bibitem{BM} {\sc M. Bardi, P. Mannucci,}
%\newblock \emph {On the Dirichlet problem for non-totally degenerate
%fully nonlinear elliptic equations,} \newblock Commun. Pure Appl. Anal.  \textbf{5}  (2006), 709--731.

\bibitem{BLP} {\sc A. Bensoussan, J.L. Lions, G. Papanicolau},
\newblock {\em{Asymptotic Analysis for Periodic Structures}}, \newblock North-Holland, Amsterdam (1978).

\bibitem{B1}{\sc A. Bensoussan,}
\newblock{\em{Perturbation methods in optimal control}}, \newblock Wiley/Gauthier-Villars Series in Modern Applied Mathematics. John Wiley \& Sons, Chichester; Gauthier-Villars, Montrouge 1988.

\bibitem{BCDC1} {\sc I. Birindelli, I. Capuzzo Dolcetta, A. Cutr\`\i,}
\newblock{\em{Liouville theorems for semilinear equations on the Heisenberg group}}, \newblock  Ann. Inst. H. Poincar\'e Anal. Non Lin\'eaire \rm{\bf 14} (1997), 295--308. 

\bibitem{BCDC2} {\sc I. Birindelli, I. Capuzzo Dolcetta, A. Cutr\`\i,}
\newblock{\em{Indefinite semi-linear equations on the Heisenberg group: a priori bounds and existence}}, \newblock  Comm. Partial Differential Equations \rm{\bf 23} (1998), 1123--1157.  

%\bibitem{BW}  {\sc I. Birindelli, J. Wigniolle,}
%\newblock {\em {Homogenization of Hamilton-Jacobi equations in the Heisenberg group.}}, \newblock Commun. Pure Appl. Anal., \rm{\bf 2}, (2003), no. 4, 461--479.

\bibitem{BMT1}  {\sc M. Biroli, U. Mosco, N. Tchou,}
\newblock \emph {Homogenization by the Heisenberg group}, \newblock Adv. Math. Sci. Appl., \rm{\bf 7} (1997), 809--831.

\bibitem{BLU} {\sc A. Bonfiglioli, E. Lanconelli, F. Uguzzoni,}
\newblock{\em{Stratified Lie groups and potential theory for their sub-Laplacians}}, \newblock Springer Monographs in Mathematics, Springer, Berlin 2007. 

\bibitem{Bo} {\sc J. M. Bony,}
\newblock{\em {Principe du maximum, in\'egalite de Harnack et unicit\'e du probl\'eme de Cauchy pour les op\'erateurs elliptiques d\'eg\'en\'er\'es}} \newblock  Ann. Inst. Fourier (Grenoble) \rm{\bf 19}, (1969), 277--304.

%\bibitem{BMT}  {\sc M. Biroli, U. Mosco, N. Tchou,}
%\newblock \emph {Homogenization for degenerate operators with periodical coefficients with respect to the Heisenberg group}. \newblock C. R. Acad. Sci. Paris S�r. I Math. \rm{\bf 322},  no. 5, (1996), 439--444.


\bibitem{CDC} {\sc I. Capuzzo Dolcetta, A. Cutr\`\i,}
\newblock{\em{On the Liouville property for sub-Laplacians}}, \newblock 
 Ann. Scuola Norm. Sup. Pisa Cl. Sci. (4), \rm{\bf 25} (1997), 239--256.

 \bibitem{C}  {\sc M. Cirant,}
\newblock \emph {On the solvability of some ergodic control problems}, SIAM J. Control Optim., \rm{\bf 52} (2014), 4001--4026. 

%\bibitem{CC} {\sc L. A. Caffarelli, X. Cabr\'e},
%\newblock {\em {Fully nonlinear elliptic equations}},
%\newblock AMS Colloquium pubblications, \rm{\bf 43}, Providence (1995).

\bibitem{CIL} {\sc M.G. Crandall, H. Ishii, P.L. Lions,}
\newblock {\em {User's guide to viscosity solutions of second-order partial differential equations}}, \newblock Bull. Amer. Math. Soc., \rm{\bf 27} (1992), 1--67.

%\bibitem{CT} {\sc A. Cutr\`\i, N. Tchou,} \newblock \emph {Barrier functions for Pucci-Heisenberg operators and applications} \newblock Int. J. Dyn. Syst. Differ. Equ. \textbf{1, 2} (2007), 117--131.

%\bibitem{DLL} {\sc F. Da Lio, O. Ley,}
%\newblock {\em {Uniqueness results for second-order Bellman-Isaacs equations under quadratic growth assumptions and applications}}, \newblock SIAM J. Control Optim., \rm{\bf 45} (2006), 74--106.

%\bibitem{E}{\sc Evans,} \newblock

\bibitem{FS} {\sc W.H. Fleming, H.M. Soner,}
\newblock{\em{Controlled Markov Processes and Viscosity Solutions}}, Springer-Verlag, Berlin, 1993.


%\bibitem{FGN} {\sc S. Francinou, H. Gianella, S. Nicolas,}\newblock{\em{Exercices de math\'ematiques: oraux X- ENS, Analyse 1}},
%\newblock 2-\`eme Edition, Cassini, Paris, 2007.

\bibitem{Ha} {\sc R.Z. Has'minski\u \i}
\newblock{\em{Stochastic stability of differential equations}}, \newblock Monographs and Textbooks on Mechanics of Solids and Fluids: Mechanics and Analysis, 7. Sijthoff \& Noordhoff, Alphen aan den Rijn--Germantown, Md. 1980.

\bibitem{Ho}{\sc L. H\"{o}rmander,}
\newblock {\em {Hypoelliptic second order differential equations}}, 
\newblock Acta Math. Uppsala, \rm{\bf 119} (1967), 147--171.

%\bibitem{JKO} {\sc V.V Jikov, S.M. Kozlov and O.A. Oleinik}
%\newblock{\em{Homogenization of Differential Operators and Integral Functionals}}, Springer, Berlin (1994).

\bibitem{IK} {\sc K. Ichihara, H. Kunita,}
\newblock{\em{A classification of the second order degenerate elliptic operators and its probabilistic characterization}}, \newblock Z. Wahrscheinlichkeitstheorie und Verw. Gebiete \rm{\bf 30} (1974), 235--254.

\bibitem{KP}{\sc Y. Kabanov, S. Pergamenshchikov}
\newblock {\em { Two-scale stochastic systems. Asymptotic analysis and control}}, \newblock Applications of Mathematics (New York), \rm{\bf 49}, Stochastic Modelling and Applied Probability, Springer-Verlag, Berlin, 2003.

\bibitem{Kri}{\sc N.V. Krylov,}
\newblock {\em {H\"older continuity and $L_p$ estimates for elliptic equations under general H\"ormander's condition}}, 
\newblock Topological Methods Nonlinear Anal. \rm{\bf 9} (1997), 249--258.

\bibitem{Ku} {\sc H. Kunita}
\newblock{\em{Asymptotic behavior of the nonlinear filtering errors of Markov processes}}, \newblock J. Multivariate Anal. \rm{\bf 1} (1971), 365--393.

 \bibitem{Kus}{\sc H.J. Kushner},
\newblock {\em { Weak convergence methods and singularly perturbed stochastic control and filtering problems}},  Birkh\"auser Boston, Inc., Boston, MA, 1990.  
 
 %\bibitem{IL}  {\sc H. Ishii, P.L. Lions,}
%\newblock {\em {Viscosity solutions of fully nonlinear second-order elliptic 
%partial differential equations}}, \newblock J. Diff. Eq., \rm{\bf 83}, 26-78, (1990).



\bibitem{Lnt} {\sc P.L. Lions}
\newblock{\em{Lectures at Coll\`ege de France 2014--2015}}, \newblock{avaliable at: http://www.college-de-france.fr/site/pierre-louis-lions/course-2014-2015.htm } .

%\bibitem{LPV}  {\sc P.L. Lions, G. Papanicolau, S.R.S. Varadhan}
%\newblock {\em {Homogenization of Hamilton-Jacobi equations}}, \newblock unpublished (1986).


\bibitem{LM}  {\sc P.L. Lions, M. Musiela,}
\newblock{\em{Ergodicity of diffusion processes}}, \newblock unpublished. 

\bibitem{LB} {\sc L. Lorenzi, M. Bertoldi,} Analytical methods for Markov semigroups, Chapman \& Hall/CRC, Boca Raton, FL, 2007.

%\bibitem{M1}{\sc P. Mannucci,}
%\newblock {\em {The Dirichlet problem for fully nonlinear elliptic equations non-degenerate in a fixed direction}}, \newblock  Commun. Pure Appl. Anal. 
%Volume 13, Number 1, (2014) pp. 119-133.

\bibitem{MS}{\sc P. Mannucci, B. Stroffolini,}
\newblock{\em{Periodic homogenization under a hypoellipticity condition}}
\newblock  NoDEA Nonlinear Differential Equations Appl., \rm{\bf 22} (2015), 579--600.

\bibitem{OR} O. A. Oleinik, E.V. Radkevic,
Second order equations with nonnegative characteristic form,
 Plenum Press, New York (1973).

\bibitem{PV1} {\sc E. Pardoux, A.Y. Veretennikov,}
\newblock{\em{On the Poisson equation and diffusion approximation, I}}, Ann. Probab. \rm{\bf 29} (2001), 1061-1085.

\bibitem{PV2} {\sc E. Pardoux, A.Y. Veretennikov,}
\newblock{\em{On the Poisson equation and diffusion approximation, II}}, Ann. Probab. \rm{\bf 31} (2003), 1166-1192.

\bibitem{PV3} {\sc E. Pardoux, A.Y. Veretennikov,}
\newblock{\em{On the Poisson equation and diffusion approximation, III}}, Ann. Probab. \rm{\bf 33} (2005), 1111-1133.


\end{thebibliography}
%\bibliographystyle{plain}

\noindent {\bf Address of the authors}\\
Paola Mannucci,\\
Dipartimento di Matematica,
Universit\`a degli Studi di Padova,\\
Via Trieste 63, 35131, Padova, Italy,\\
Claudio Marchi,\\
Dipartimento di Ingegneria dell'Informazione,
Universit\`a degli Studi di Padova,\\
Via Gradenigo 6, 35131, Padova, Italy,\\
Nicoletta Tchou, IRMAR, \\
Universit\`a de Rennes 1\\
Campus de Beaulieu, 35042 Rennes Cedex, France \\
mannucci@math.unipd.it, 
claudio.marchi@unipd.it,
nicoletta.tchou@univ-rennes1.fr	

\end{document}